\numberwithin{equation}{section}
\newtheorem{Theorem}{Theorem}[section]
\newtheorem*{Theorem*}{Theorem}
\newtheorem{lem}[Theorem]{Lemma}
\newtheorem{prop}[Theorem]{Proposition}
 { \theoremstyle{definition}
\newtheorem{defn}[Theorem]{Definition}

\newtheorem{eg}[Theorem]{Example}
\newtheorem{Remark}[Theorem]{Remark} }
\newcommand{\e}{\varepsilon}
\newcommand{\usslevi}{U_q(\mathfrak{l}^{\,\mathrm{s}}_S)}
\newcommand\del{\partial}
\newcommand\adel{\overline{\partial}}
\newcommand{\qphs}{\mathcal{O}_q\big(G/L^{\mathrm{s}}_S\big)}
\newcommand\unit{\mathrm{U}}
\begin{document}
\allowdisplaybreaks

\newcommand{\arXivNumber}{2202.09842}

\renewcommand{\PaperNumber}{070}

\FirstPageHeading

\ShortArticleName{Bimodule Connections for Relative Line Modules}

\ArticleName{Bimodule Connections for Relative Line Modules\\ over the Irreducible Quantum Flag Manifolds}

\Author{Alessandro CAROTENUTO and R\'eamonn \'O~BUACHALLA}
\AuthorNameForHeading{A.~Carotenuto and R.~\'O~Buachalla}
\Address{Mathematical Institute of Charles University, Sokolovsk\'a 83, Prague, Czech Republic}
\Email{\href{mailto:acaroten91@gmail.com}{acaroten91@gmail.com}, \href{mailto:obuachalla@karlin.mff.cuni.cz}{obuachalla@karlin.mff.cuni.cz}}

\ArticleDates{Received April 14, 2022, in final form September 18, 2022; Published online October 01, 2022}

\Abstract{It was recently shown (by the second author and D\'iaz Garc\'ia, Krutov, Somberg, and Strung) that every relative line module over an irreducible quantum flag manifold $\mathcal{O}_q(G/L_S)$ admits a unique $\mathcal{O}_q(G)$-covariant connection with respect to the Heckenberger--Kolb differential calculus $\Omega^1_q(G/L_S)$. In this paper we show that these connections are bimodule connections with an invertible associated bimodule map. This is proved by applying general results of Beggs and Majid, on principal connections for quantum principal bundles, to the quantum principal bundle presentation of the Heckenberger--Kolb calculi recently constructed by the authors and D\'iaz Garc\'ia. Explicit presentations of the associated bimodule maps are given first in terms of generalised quantum determinants, then in terms of the FRT presentation of the algebra $\mathcal{O}_q(G)$, and finally in terms of Takeuchi's categorical equivalence for relative Hopf modules.}

\Keywords{quantum groups; noncommutative geometry; bimodule connections; quantum principal bundles; quantum flag manifolds; complex geometry}

\Classification{46L87; 81R60; 81R50; 17B37; 16T05}

\section{Introduction}

Left connections for left modules (and right connections for right modules) are basic structures in noncommutative geometry. They generalise connections for a vector bundle $V$, where of course the left and right actions on the space of sections $\Gamma(V)$ coincide. However, in the noncommutative setting it is most convenient, where possible, to consider bimodules. This prompted the development of a theory of bimodule connections, which is to say, left (right) connections, together with a compatibility for the right (left) action, described in terms of an associated bimodule map. The literature on bimodule connections spans twenty-five years, from the original works \cite{MdV01, DVMa96, DVMi96, Madore99, MMM95} to more recent works such as \cite{AS14,BM17,BGL20, Aryan2020,M20}. The interested reader can find a very thorough exposition of these topics in the recent monograph \cite{BeggsMajid:Leabh}.

Quantum principal bundles and \emph{principal} connections were introduced by Brzezi\'{n}ski and Majid \cite{TBSM1} as a method for constructing left connections for modules associated to right comodule algebras $B = P^{\textrm{co}(H)}$. Now associated modules are naturally bimodules, and so, it is natural to ask when a principal connection induces a bimodule connection on an associated module. This question has been thoroughly investigated by Beggs and Majid \cite[Section~5.4]{BeggsMajid:Leabh}, resulting in a~number of sufficient criteria for determining when bimodule connections are produced.

This provides us with powerful machinery to construct bimodule connections, and this paper concerns itself with an application of this machinery to the theory of Drinfeld--Jimbo quantum groups.
Examples of bimodule connections in the setting of Drinfeld--Jimbo quantum groups have been constructed for the quantum plane in \cite{MouradDVMadore94}, then for the relative line modules $\mathcal{E}_{k}$ over the standard Podle\'s sphere in \cite{KLVSCP1} and \cite[Example 5.23]{BeggsMajid:Leabh}, for the quantum projective plane in \cite{KM11}, and for the first-order Heckenberger--Kolb calculi of quantum projective space in \cite{M20}. In \cite{HVBQFM} the second author, together with D\'iaz Garc\'ia, Krutov, Somberg and Strung, showed that the relative line modules over all irreducible quantum flag manifolds $\mathcal{O}_q(G/L_S),$ with the first order Heckenberger--Kolb calculus, admit a unique $\mathcal{O}_q(G)$-covariant connection. The same connection was then used when proving the Borel Weil theorem for quantum Grassmannians in \cite{BwGrass} and for the irreducible quantum flag manifolds in \cite{CDOBBW}.
In this paper we use the theory of quantum principal bundles to extend Beggs and Majid's work on the standard Podle\'{s} sphere to the setting of the irreducible quantum flag manifolds $\mathcal{O}_q(G/L_S)$, and so, produce a large family of systematically constructed bimodule connections for Drinfeld--Jimbo quantum spaces.

In more detail, the irreducible quantum flag manifolds are a broad family of quantum homogeneous spaces, with the standard Podle\'s sphere, and more generally quantum projective space, as the most tractable examples. From the seminal work of Heckenberger and Kolb \cite{HK,HKdR}, we know that each $\mathcal{O}_q(G/L_S)$ possesses an essentially unique $\mathcal{O}_q(G)$-covariant \mbox{$q$-deformation} of its classical de Rham complex $\Omega^{\bullet}_q(G/L_S)$, constituting one of the most important structures in the noncommutative geometry of quantum groups. The relative line modules $\mathcal{E}_k$ over each $\mathcal{O}_q(G/L_S)$, which are indexed by integers $k \in \mathbb{Z}$, were shown in \cite{HVBQFM} to possesses a unique left $\mathcal{O}_q(G)$-covariant connection $\nabla$ with respect to $\Omega^{\bullet}_q(G/L_S)$. Now as established in the series of papers \cite{CDOBBW,BwGrass,GAPP}, the Heckenberger--Kolb calculi admit quantum principal bundle descriptions. In this presentation, the total algebra of the quantum bundle is the quantum Poisson homogeneous space\looseness=-1
\begin{gather*}
\mathcal{O}_q\big(G/L^{\,\mathrm{s}}_S\big) \simeq \bigoplus_{k \in \mathbb{Z}} \mathcal{E}_k,
\end{gather*}
and the first-order calculus is the Heckenberger--Kolb calculus over $\mathcal{O}_q\big(G/L^{\,\mathrm{s}}_S\big)$ introduced in \cite[Section~3.2.3]{HKdR}. The zero map was then shown to be a strong principal connection, allowing the connection for each line module $\mathcal{E}_k$ to be realised as an associated connection. Combining this result with Beggs and Majid's general results on bimodule connections allows us to conclude that each $\nabla$ is a bimodule connection, as are its holomorphic and anti-holomorphic summands. An immediate application is a description, for each $k \in \mathbb{Z}_{>0}$, of the connections on $\mathcal{E}_{\pm k}$ as the $k$-fold tensor product of the $\mathcal{E}_{\pm 1}$ connection.

Our proof of the bimodule property has the advantage of being simple and conceptual, but the disadvantage of not providing an explicit description of the associated bimodule map $\sigma$. To address this we produce three complementary explicit descriptions of $\sigma$. The first approach uses a family of identities, which we loosely term \emph{generalised determinants}, coming from a dual basis description of the copy of the trivial module in the tensor product $V_{\varpi_x} \otimes {}^{\vee}V_{\varpi_x}$, where $V_{\varpi_x}$ is the simple $U_q(\mathfrak{g})$-module corresponding to the crossed Dynkin node, and ${}^{\vee}V_{\varpi_x}$ is the dual module. This extends the foundational work done in \cite{Maj} for the special case of standard Podl\'es sphere.
The second approach realises the generalised determinant identities, for the $A$ and $B$-series, in terms of the well-known FRT presentation \cite{FRT89} of the non-exceptional quantum coordinate algebras $\mathcal{O}_q(G)$. For the $A$-series, which is to say for the quantum Grassmannians, this gives an expression of the bimodule map in terms of the well-known quantum minors. Finally, we use Takeuchi's equivalence for relative Hopf modules \cite{Tak} to give an alternative description of $\sigma$ for all the irreducible quantum flag manifolds.

We consider our results on tensor products to be an important step towards a conceptual explanation for the phenomenon of $q$-deformed additivity of tensor product curvature \cite[Example~5.51]{BeggsMajid:Leabh}, and we intend to investigate this in future work.

\subsection{Summary of the paper}

The paper is organised as follows: in Section~\ref{section2} we recall basic material on first-order differential calculi, Hopf--Galois extensions, and quantum principal bundles. Moreover, we briefly recall necessary material about Drinfeld--Jimbo quantum groups, their quantum flag manifolds, and their relative line modules.

In Section~\ref{section3} we give a simple accompaniment to Beggs and Majid's criteria for a strong principal connection to induce bimodule connections. We then recall the recent classification of $\mathcal{O}_q(G)$-covariant connections for line modules over the irreducible quantum flag manifolds~\cite{HVBQFM}. We also recall the recent quantum principal bundle description of the Heckenberger--Kolb calculi~\cite{CDOBBW}. Building directly on this material, we prove the main result of the paper:

\begin{Theorem}
For each line module $\mathcal{E}_k$, the connection $\nabla$ is a bimodule connection. Moreover, the associated bimodule map is a left $\mathcal{O}_q(G)$-comodule isomorphism.
\end{Theorem}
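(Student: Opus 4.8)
The plan is to obtain the theorem as a direct application of the quantum principal bundle description of the Heckenberger--Kolb calculus, so the argument will be structural rather than computational. First I would fix the bundle in question: the base algebra $B := \mathcal{O}_q(G/L_S)$ sitting inside the total algebra $P := \mathcal{O}_q\big(G/L^{\,\mathrm{s}}_S\big) \simeq \bigoplus_{k \in \mathbb{Z}} \mathcal{E}_k$, with structure Hopf algebra the commutative Hopf algebra $\mathcal{O}(U(1)) \cong \mathbb{C}[t,t^{-1}]$, chosen so that this $\mathbb{Z}$-grading \emph{is} the structure-group coaction and each $\mathcal{E}_k$ is the module associated to the weight-$k$ corepresentation; the total calculus is the Heckenberger--Kolb calculus $\Omega^1_q\big(G/L^{\,\mathrm{s}}_S\big)$ of \cite[Section~3.2.3]{HKdR}, restricting to $\Omega^1_q(G/L_S)$ on $B$. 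I would open the proof by recalling, from \cite{CDOBBW}, the two facts that drive everything: the zero connection form is a \emph{strong} principal connection for this bundle, and the left connection it induces on each associated module $\mathcal{E}_k$ coincides with the unique $\mathcal{O}_q(G)$-covariant connection $\nabla$ of \cite{HVBQFM}.

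The heart of the argument is then to feed this into our refinement of \cite[Section~5.4]{BeggsMajid:Leabh}: a strong principal connection on a quantum principal bundle whose total first-order calculus is generated by the horizontal one-forms induces, on every associated module, a bimodule connection, and --- strongness being in force --- the associated bimodule map $\sigma \colon \mathcal{E}_k \otimes_B \Omega^1_q(G/L_S) \to \Omega^1_q(G/L_S) \otimes_B \mathcal{E}_k$ is invertible. The generation hypothesis is exactly what makes the zero connection strong and is part of the quantum principal bundle presentation; so applying the refinement to the bundle above shows at once that, for each $k$, the connection $\nabla$ on $\mathcal{E}_k$ is a bimodule connection with invertible associated map $\sigma$. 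Concretely, $\sigma$ is obtained by using strongness to rewrite $\mathrm{d}b$, for $b \in B$, with horizontal one-forms on the left and extracting the resulting braiding of the bimodule $\mathcal{E}_k$ past $\Omega^1_q(G/L_S)$; its invertibility is the statement that this braiding can be undone.

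Finally, for colinearity I would observe that the whole quantum principal bundle --- the algebras $B \subseteq P$, the structure Hopf algebra, the coaction defining the grading, the calculus $\Omega^1_q\big(G/L^{\,\mathrm{s}}_S\big)$, and the zero connection form --- is $\mathcal{O}_q(G)$-covariant, so the associated connection $\nabla$ and the map $\sigma$ are assembled entirely out of left $\mathcal{O}_q(G)$-comodule morphisms; hence $\sigma$ is a left $\mathcal{O}_q(G)$-comodule map, and since it is invertible it is a left $\mathcal{O}_q(G)$-comodule isomorphism. (Equivalently, the $\mathcal{O}_q(G)$-colinearity of $\nabla$ forces that of $\sigma$ through the defining identity $\nabla(eb) = (\nabla e)\,b + \sigma(e \otimes \mathrm{d}b)$ for $e \in \mathcal{E}_k$ and $b \in B$.) The one genuine obstacle is therefore upstream of the application: distilling from \cite[Section~5.4]{BeggsMajid:Leabh} a clean general statement producing, from a strong principal connection, a bimodule connection whose associated map is invertible and covariant --- with the interplay between strongness and invertibility of $\sigma$ the delicate point --- and matching the abstract associated connection with the connection $\nabla$ of \cite{HVBQFM}, which relies on the uniqueness clause there. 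Once those are secured the theorem is formal.
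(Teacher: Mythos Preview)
Your strategy is the paper's strategy: invoke the quantum principal bundle presentation of \cite{CDOBBW}, observe that the zero map is a strong principal connection and trivially a right $B$-module map, and apply a distilled version of \cite[Section~5.4]{BeggsMajid:Leabh} (the paper's Theorem~\ref{thm:theTheorem}) together with $\mathcal{O}_q(G)$-covariance. Two points deserve sharpening.

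First, a structural remark: the paper does not work with a single bundle $\big(\mathcal{O}_q(G/L^{\,\mathrm{s}}_S),\Omega^1_q(G/L^{\,\mathrm{s}}_S)\big)$ but with the $(0,1)$ and $(1,0)$ bundles separately, obtaining bimodule connections $\adel_{\mathcal{E}_k}$ and $\del_{\mathcal{E}_k}$ and then summing to get $\nabla$. This is because the quantum principal bundle description in \cite{CDOBBW} is established for each half separately.

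Second, and this is a genuine gap: invertibility of $\sigma$ does \emph{not} follow from the principal connection being strong. In the paper's general result, a strong principal connection that is a right $B$-module map gives a bimodule connection with \emph{injective} $\sigma$; to get surjectivity one needs the additional \emph{strong bundle} condition $\Omega^1(B)P = P\Omega^1(B)$ (Definition~\ref{qpb}). Your phrase ``strongness being in force'' conflates these two distinct notions. The paper establishes the strong-bundle property as a separate preliminary lemma, using the generalised determinant identities $\sum v'_i f'_i = 1$ (with $\adel f'_i = 0$) to rewrite $\Omega^{(0,1)}_q(G/L_S)\,\mathcal{O}_q(G/L^{\,\mathrm{s}}_S)$ inside $\mathcal{O}_q(G/L^{\,\mathrm{s}}_S)\,\Omega^{(0,1)}_q(G/L_S)$. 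The ``delicate point'' you flag at the end is exactly this lemma, and it requires a concrete ingredient (the determinant relations) beyond the purely structural input you describe.
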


We also make some comments on the associated tensor product connections.

In Section~\ref{section4} explicit presentations of the associated bimodule maps are given in terms of generalised quantum determinants, then in terms of the FRT presentations of $\mathcal{O}_q(\mathrm{SU}_n)$ and $\mathcal{O}_q(\mathrm{SO}_{2n+1})$, and finally in terms of Takeuchi's categorical equivalence for relative Hopf modules.

In Appendix~\ref{appendixA} we recall some useful material about the irreducible quantum flag manifolds.

\section{Preliminaries}\label{section2}

In this section we briefly recall standard material on first-order differential calculi, Hopf--Galois extensions, and quantum principal bundles. We refer the reader to \cite{BeggsMajid:Leabh} for a more detailed presentation. Moreover, we briefly recall necessary material about Drinfeld--Jimbo quantum groups, their quantum flag manifolds, and their relative line modules.

\subsection{First-order differential calculi and bimodule connections}\label{section2.1}

In this subsection we recall the notion of a bimodule connection for a differential calculus, see \cite[Section~3.4]{BeggsMajid:Leabh} for further details. A {\em first-order differential calculus} (fodc) over an algebra $B$ is a pair $\big(\Omega^1(B),\mathrm{d}\big)$, where $\Omega^1(B)$ is a $B$-bimodule and $\mathrm{d}\colon B \to \Omega^1(B)$ is a derivation such that~$\Omega^1$ is generated as a left $B$-module by those elements of the form~$\mathrm{d} b$, for~$b \in B$.
The {\em universal fodc} over $B$ is the pair $\big(\Omega^1_u(B), \mathrm{d}_u\big)$, where $\Omega^1_u(B)$ is the kernel of the multiplication map $m_B\colon B \otimes B \to B$, and $\mathrm{d}_u\colon B \to \Omega^1_u(B)$ is defined by $\mathrm{d}_u b = 1 \otimes b - b \otimes 1$. Every fodc over~$B$ can be obtained as the quotient of $\Omega^1_u(B)$ by a $B$-sub-bimodule $N\subset \Omega^1_u(B)$. For any subalgebra $B' \subseteq B$, the \emph{restriction} of a~fodc over $B$ to $B'$ is the fodc $\Omega^1(B') \subseteq \Omega^1(B)$ over $B'$ generated by the elements $\mathrm{d}b$, for~$b \in B'$.

For $\Omega^1(B)$ a fodc over an algebra $B$ and $\mathcal{F}$ a left $B$-module, a \emph{left connection} on $\mathcal{F}$ is a~$\mathbb{C}$-linear map $\nabla\colon \mathcal{F} \to \Omega^1(B)\otimes_B \mathcal{F}$ satisfying the \emph{left Leibniz rule}
\begin{gather*}
\nabla(bf) = \mathrm{d} b \otimes f + b \nabla f, \qquad \textrm{for all } \quad b \in B, f \in \mathcal{F}.
\end{gather*}
A \emph{left bimodule connection} on $\mathcal{F}$ is a pair $(\nabla,\sigma)$ where $\nabla$ is a left connection and $\sigma\colon \mathcal{F} \otimes_B \Omega^1(B) \rightarrow \Omega^1(B) \otimes_B \mathcal{F} $ is a bimodule map satisfying
\begin{gather}\label{eqn:BiC}
 \sigma (f\otimes \mathrm{d}b)= \nabla (fb)-\nabla(f)b.
\end{gather}
Note that in the commutative case $\sigma(f \otimes \mathrm{d} b) = \mathrm{d} b \otimes f$, that is, $\sigma$ reduces to the flip map.

Let $\mathcal{F}$ be a bimodule, $(\nabla_{\mathcal{F}}, \sigma_{\mathcal{F}})$ a bimodule connection, and $\mathcal{E}$ a bimodule with bimodule connection $(\nabla_\mathcal{E}, \sigma_{\mathcal{E}})$, we define the connection for $\mathcal{F} \otimes_B \mathcal{E}$ as follows
\begin{gather}\label{eqn:bimcontenpr}
 \nabla_{\mathcal{F}\otimes \mathcal{E}}= \nabla_{\mathcal{F}}\otimes \mathrm{id}_{\mathcal{E}}+(\sigma_{\mathcal{F}}\otimes \mathrm{\mathrm{id}}_\mathcal{E})\circ (\mathrm{id}_{\mathcal{F}}\otimes \nabla_{\mathcal{E}}).
\end{gather}
Note that $\nabla_{\mathcal{F} \otimes \mathcal{E}}$ is again a bimodule connection, with bimodule map $(\sigma_{\mathcal{F}} \otimes \mathrm{id}) \circ (\mathrm{id} \otimes \sigma_{\mathcal{E}})$.

\begin{eg} \label{eg:trivialBimodConn}
For any fodc $\big(\Omega^1(B),\mathrm{d}\big)$, through the isomorphism $\Omega^1(B) \simeq \Omega^1(B) \otimes_B B$, we can consider $\mathrm{d}\colon B \to \Omega^1(B)$ as a connection for $B$. Since
\[
\mathrm{d}(bb') - \mathrm{d}(b)b' = b\mathrm{d}(b'),
\]
it holds that this is a bimodule connection with associated bimodule map
\begin{gather*}
\sigma(b \otimes \mathrm{d} b') = b\mathrm{d} b' \otimes 1.
\end{gather*}
This elementary bimodule connection will be used in the proof of Lemma~\ref{prop:TAKtheta} below.
\end{eg}

Let $\mathcal{F}$ and $\mathcal{F}'$ be two left $B$-bimodules, and let $\nabla\colon \mathcal{F} \to \Omega^1(B) \otimes_B \mathcal{F} $ and $\nabla'\colon \mathcal{F}' \to \Omega^1(B) \otimes_B \mathcal{F}'$ be two connections, then we say that $\nabla$ and $\nabla'$ are \emph{isomorphic} if we have an isomorphism of $B$-bimodules $\alpha\colon \mathcal{F} \to \mathcal{F}'$ such that the following diagram commutes:
\[
\xymatrix{
\mathcal{F} \ar[d]_{\alpha} \ar[rrr]^{\nabla} & & & \Omega^1(B) \otimes_B \mathcal{F} \ar[d]^{\mathrm{id} \otimes \alpha} \\
\mathcal{F}' \,\, \ar[rrr]_{\nabla'} & & & \Omega^1(B) \otimes_B \mathcal{F}'.
}
\]

\subsection{Quantum principal bundles}\label{section2.2}

Let $(P,\Delta_R)$ be a right $H$-comodule algebra, for some Hopf algebra $H$. A fodc over $P$ is said to be {\em right $H$-covariant} if the following (necessarily unique) map is well defined
\begin{gather*}
\Delta_R\colon \ \Omega^1(P) \to \Omega^1(P) \otimes H, \qquad p \mathrm{d} q \mapsto p_{(0)} \mathrm{d} q_{(0)} \otimes p_{(1)} q_{(1)}.
\end{gather*}
Covariance for a fodc over a left comodule algebra is defined similarly.

Let $(P,\Delta_R)$ be a right $H$-comodule algebra. We say that $P$ is a Hopf--Galois extension of $B = P^{\,\mathrm{co}(H)}$ a bijection is given by
\[
\mathrm{can} := (m_P \otimes \mathrm{id}) \circ (\mathrm{id} \otimes \Delta_R)\colon \ P \otimes_B P \to P \otimes H,
\]
where $m_P$ denotes the multiplication of $P$. A \emph{right $H$-principal comodule algebra} is a right $H$-comodule algebra $P$, such that $P$ is a Hopf--Galois extension of $B$, and such that $P$ is faithfully flat as a $B$-bimodule.

\begin{defn} \label{qpb}
Let $H$ be a Hopf algebra. A {\em quantum principal $H$-bundle} is a pair $\big(P,\Omega^1(P)\big)$, consisting of a right $H$-principal comodule algebra $(P,\Delta_R)$, and a right $H$-covariant calculus~$\Omega^1(P)$, such that, if $N \subseteq \Omega^1_u(P)$ is the sub-bimodule of the universal calculus corresponding to $\Omega^1(P)$, we have $\mathrm{ver}(N) = P \otimes I$, for some $\mathrm{Ad}$-subcomodule right ideal
\[
I \subseteq H^+ := \ker(\e\colon H \to \mathbb{C}),
\]
where
$\mathrm{Ad} \colon H \to H \otimes H$ is defined by $\mathrm{Ad}(h) := h_{(2)} \otimes S(h_{(1)}) h_{(3)}$.
A quantum principal bundle is called \textit{strong} when it holds that
$\Omega^1(B)P=P\Omega^1(B)$.
\end{defn}
We can now define a principal connection for a quantum principal bundle. In Section~\ref{subsection:ConnFromPrinCs} below we will show how to define connections for a certain class of noncommuative vector bundles starting from a principal connection.

\begin{defn}
A {\em principal connection} for a quantum principal $H$-bundle $\big(P,\Omega^1(P)\big)$ is a left $P$-module, right $H$-comodule, projection $\Pi\colon \Omega^1(P) \to \Omega^1(P)$ satisfying
\[
\ker(\Pi) = P\Omega^1(B)P.
\]
\end{defn}
A principal connection $\Pi$ is called {\em strong} if $(\mathrm{id} - \Pi) \big(\mathrm{d} P\big) \subseteq \Omega^1(B)P$. It is clear that for any strong principal bundle, all principal connections are strong.

\subsection{Connections from principal connections} \label{subsection:ConnFromPrinCs}

For $P$ a right principal $H$-comodule algebra, we denote by ${}_P\mathrm{Mod}$ the category of left $P$-modules, and we define a functor
\begin{gather*}
\Psi\colon \ {}^{H}\mathrm{Mod} \to {}_B\mathrm{Mod}_B, \qquad V \mapsto P \, \square_H V,
\end{gather*}
where the right $B$-action is given by multiplication on the first tensor factor, and that acts on morphisms $\gamma\colon V \to W$ as $\Psi(\gamma) = \mathrm{id} \otimes \gamma$. We call any left $P$-module of the form $\Psi(V)$, for some $V \in {}^H\mathrm{Mod}$, an \emph{associated module}. We can use principal connections to define a connection $\nabla\colon \mathcal{F} \to \Omega^1(B) \otimes_B \mathcal{F}$ for any left $P$-module of the form $\mathcal{F} := \Psi(V).$
Note first that we have a~natural embedding
\begin{gather*}
j\colon \ \Omega^1(B) \otimes_B \mathcal{F} \hookrightarrow \Omega^1(B)P \, \square_H V
\end{gather*}
given by the multiplication map.
A strong principal connection $\Pi$ determines a connection $\nabla$ on $\mathcal{F}$ by
\begin{gather*}
 \nabla\colon \ \mathcal{F} \to \Omega^1(B) \otimes_B \mathcal{F}, \qquad \sum_i p_i \otimes v_i \mapsto j^{-1} \big((\mathrm{id} - \Pi)(\mathrm{d} p_i) \otimes v_i \big).
\end{gather*}
Indeed, since $\mathrm{d}$ and the projection $\Pi$ are both right \mbox{$H$-comodule} maps, their composition $(\mathrm{id} - \Pi) \allowbreak \circ \mathrm{d} \otimes \mathrm{id}$ is a right $H$-comodule map. Hence the image of $(\mathrm{id} - \Pi) \circ \mathrm{d}\otimes \mathrm{id}_V$ is contained in $j\big(\Omega^1(B) \otimes_B \mathcal{F}\big)$.

Let us now additionally assume that $P$ admits a left $A$-coaction giving it the structure of an $(A,H)$-comodule algebra, that $\Omega^1(P)$ is also left $A$-covariant, and that $\Pi$ is also a left $A$-comodule map. In this case, any $\mathcal{F}$ of the form $P \, \square_H V$ has an obvious left $A$-comodule structure, and the associated connection $\nabla$ is a left $A$-comodule map.

\subsection{Drinfeld--Jimbo quantum groups}\label{section2.4}

Let $\mathfrak{g}$ be a finite-dimensional complex semisimple Lie algebra of rank $l$. We fix a Cartan subalgebra $\mathfrak{h}$ and choose a set of simple roots $\Pi = \{\alpha_1, \dots, \alpha_l\}$ for the corresponding root system. Denote by $(\cdot,\cdot)$ the symmetric bilinear form induced on $\mathfrak{h}^*$ by the Killing form of $\mathfrak{g}$, normalised so that any shortest simple root $\alpha_i$ satisfies $(\alpha_i,\alpha_i) = 2$. The Cartan matrix $(a_{ij})$ of $\mathfrak{g}$ is defined by
$a_{ij} := \big(\alpha_i^{\vee},\alpha_j\big)$,
where $\alpha_i^{\vee} := 2\alpha_i/(\alpha_i,\alpha_i)$. The set of {\em fundamental weights} $\{\varpi_1, \dots, \varpi_l\}$ of $\mathfrak{g}$ is the dual basis of simple coroots $\big\{\alpha_1^{\vee},\dots, \alpha_l^{\vee}\big\}$. We denote by $\mathcal{P}$ the $\mathbb{Z}$-span of the fundamental weights, and by
${\mathcal P}^+$ the $\mathbb{Z}_{\geq 0}$-span of the fundamental weights.

Let $q \in \mathbb{R}$ such that $q \neq -1,0,1$, and denote $q_i := q^{(\alpha_i,\alpha_i)/2}$. The {\em Drinfeld--Jimbo quantised enveloping algebra} $U_q(\mathfrak{g})$ is the noncommutative associative algebra generated by the elements $E_i$, $F_i$, $K_i$, and $K^{-1}_i$, for $ i=1, \ldots, l$, subject to the relations
\begin{gather*}
 K_iE_j = q_i^{a_{ij}} E_j K_i, \qquad K_iF_j= q_i^{-a_{ij}} F_j K_i, \qquad K_i K_j = K_j K_i, \qquad K_iK_i^{-1} = K_i^{-1}K_i = 1,\\
 E_iF_j - F_jE_i = \delta_{ij}\frac{K_i - K^{-1}_{i}}{q_i-q_i^{-1}},
\end{gather*}
and the quantum Serre relations which we omit (see \cite[Section~6.1.2]{KSLeabh} for details). The formulae
\begin{gather*}
\Delta(K_i) = K_i \otimes K_i, \qquad \Delta(E_i) = E_i \otimes K_i + 1 \otimes E_i, \qquad \Delta(F_i) = F_i \otimes 1 + K_i^{-1} \otimes F_i
\end{gather*}
define a Hopf algebra structure on $U_q(\mathfrak{g})$, satisfying $\e(E_i) = \e(F_i) = 0$, and $\e(K_i) = 1$.

For any $U_q(\mathfrak{g})$-module $V$, a vector $v\in V$ is called a \emph{weight vector} of \emph{weight}~$\mathrm{wt}(v) \in \mathcal{P}$ if
$K_i \triangleright v = q^{(\alpha_i,\mathrm{wt}(v))} v$, for all $i=1,\ldots,r$.
For each $\lambda \in\mathcal{P}^+$ there exists an irreducible finite-dimensional $U_q(\mathfrak{g})$-module $V_\lambda$, uniquely defined by the existence of a weight vector $v_{\mathrm{hw}}\in V_\lambda$ with weight $\lambda$, called a {\em highest weight vector}, satisfying
$E_i \triangleright v_{\mathrm{hw}}=0$,
for all $i = 1, \dots, l$. We call any finite direct sum of such $U_q(\mathfrak{g})$-representations a {\em type-$1$ module}.
Each type-$1$ module is spanned by weight vectors.

Let $V$ be a finite-dimensional $U_q(\mathfrak{g})$-module, $v \in V$, and $f \in V^*$, the linear dual of $V$. Consider the function
\begin{gather*}
c^{\textrm{\tiny $V$}}_{f,v}\colon \ U_q(\mathfrak{g}) \to \mathbb{C}, \qquad X \mapsto f\big(X(v)\big).
\end{gather*}
Consider now the Hopf subalgebra of $U_q(\mathfrak{g})^\circ$, the Hopf dual of $U_q(\mathfrak{g})$, generated by all functions of the form $c^{\textrm{\tiny $V$}}_{f,v}$, for $V$ a type-$1$ representation. We denote this Hopf algebra by $\mathcal{O}_q(G)$ and call it the {\em Drinfeld--Jimbo quantum coordinate algebra of~$G$}, where~$G$ is the compact, simply-connected, simple Lie group having $\mathfrak{g}$ as its complexified Lie algebra.

\subsection{Quantum flag manifolds}\label{section2.5}

For $S$ a proper subset of simple roots, consider the Hopf subalgebra
\begin{gather*}
U_q(\mathfrak{l}_S) := \big\langle K_i, E_s, F_s \,|\, i = 1, \ldots, l = \mathrm{rank}(\mathfrak{g}); \, s \in S \big\rangle \subseteq U_q(\mathfrak{g}).
\end{gather*}
The left action of $U_q(\mathfrak{g})$ on $\mathcal{O}_q(G)$ restricts to a left $U_q(\mathfrak{l}_S)$-action. We call the invariant subspace
\begin{gather*}
\mathcal{O}_q(G/L_S) := {}^{U_q(\mathfrak{l}_S)}\mathcal{O}_q(G),
\end{gather*}
the \emph{quantum flag manifold} associated to $S$. We note that $\mathcal{O}_q(G/L_S)$ is a left $\mathcal{O}_q(G)$-comodule algebra by construction. Moreover, $\mathcal{O}_q(G)$ is faithfully flat as a right $\mathcal{O}_q(G/L_S)$-module (see for example \cite[Section~5.4]{GAPP}). It now follows from \cite[Theorem~1]{Tak} that $\mathcal{O}_q(G/L_S)$ coincides with the space of right coinvariants of the coaction $\Delta_R := ( \mathrm{id}\otimes\pi_S) \circ \Delta$, where
\[
\pi_S\colon \ \mathcal{O}_q(G) \to \mathcal{O}_q(L_S) := \mathcal{O}_q(G)/\mathcal{O}_q(G/L_S)^+\mathcal{O}_q(G)
\]
is the canonical Hopf algebra projection, with $\mathcal{O}_q(G/L_S)^+:=\mathcal{O}_q(G/L_S) \cap \ker(\e)$.

Similarly we can define the Hopf subalgebra
\begin{align*}
\usslevi := \big< K_s, E_s, F_s \,|\, s \in S \big> \subseteq U_q(\mathfrak{l}_S) \subseteq U_q(\mathfrak{g}).
\end{align*}
We call the invariant subspace
\begin{align*}
\mathcal{O}_q(G/L^{\mathrm{s}}_S):= {}^{U_q(\mathfrak{l}^{\,\mathrm{s}}_S)} \mathcal{O}_q(G)
\end{align*}
the \emph{quantum Poisson homogeneous space} associated to $S$.

\subsection{Relative line modules}\label{section2.6}

From here on, we will restrict our attention to the irreducible quantum flag manifolds $\mathcal{O}_q(G/L_S)$ (see Appendix~\ref{appendixA} for the definition of irreducible). In this case $\mathcal{O}_q(L_S)$ contains a Hopf subalgebra isomorphic to $\mathcal{O}(U_1)$, the Hopf algebra of Laurent polynomials.
Moreover, the left $\mathcal{O}_q(L_S)$-coaction on $\mathcal{O}_q(G)$ restricts to an $\mathcal{O}(U_1)$-coaction on $\mathcal{O}_q(G/L^{\,\mathrm{s}}_S)$, which is to say
\[
\Delta_{R}(\mathcal{O}_q(G/L^{\mathrm{s}}_S)) \subseteq \mathcal{O}_q(G) \otimes \mathcal{O}(U_1).
\]
This gives $\mathcal{O}_q(G/L^{\mathrm{s}}_S)$ the structure of a right $\mathcal{O}(U_1)$-comodule algebra. We denote the associated $\mathbb{Z}$-algebra grading by
\begin{gather}\label{eqn:linebundeldecomp}
\mathcal{O}_q(G/L^{\mathrm{s}}_S) =: \bigoplus_{k\in \mathbb{Z}} \mathcal{E}_k.
\end{gather}
This grading is strong, as explained in \cite[Section~4]{GAPP}, meaning that each $\mathcal{E}_k$ is a line module in the sense of \cite[Section~3.5]{BeggsMajid:Leabh}. Moreover, we see that each $\mathcal{E}_k$ admits a natural presentation as an associated module to our $\mathcal{O}(U_1)$-comodule algebra (see~\cite[Section~5]{GAPP} for a detailed discussion).

By construction each $\mathcal{E}_k$ is a left $\mathcal{O}_q(G)$-subcomodule of $\mathcal{O}_q(G)$, and so, it is a relative Hopf module in the sense of Takeuchi (see Section~\ref{subsection:Takeuchi}). Hence, following \cite[Section~3.1]{DOKSS} and \cite[Section~2.3]{GAPP} we refer to each $\mathcal{E}_k$ as a \emph{relative line module}.

We can give an explicit description of this grading as follows: let $\alpha_x$ be the simple root not contained in $S$, and let $\varpi_x$ be the corresponding simple weight. Choose a weight basis $\{v_i\}_{i=1}^{N}$ of $V_{\varpi_x}$ such that $v_{N}$ is a highest weight vector. For $w_0$ the longest element of the Weyl group of~$\mathfrak{g}$, we denote the dual basis of $V_{-w_0(\varpi_x)}$ by $\{f_i\}_{i=1}^{N}$.
From \cite[Theorem~4.1]{Stok}, we know that a~set of generators for $\mathcal{O}_q(G/L^{\,\mathrm{s}}_S)$ is given by
\begin{gather*}
z_{i} := c^{V_{\varpi_x}}_{f_i,v_N}, \qquad \overline{z}_j := c^{V_{-w_0(\varpi_x)}}_{v_j,f_N} \qquad \text{for } i,j = 1, \dots, N := \dim(V_{\varpi_x}).
\end{gather*}
The $\mathbb{Z}$-grading on $\mathcal{O}_q\big(G/L^{\,\mathrm{s}}_S\big)$ is completely determined by $\deg(z_1) = 1$, and $\deg(\overline{z}_i) = -1$, for all $i = 1, \dots, N$.

\section{Bimodule connections for the relative line modules}\label{section3}

In this section we give a simple accompaniment to Beggs and Majid's criteria for a strong principal connection to induce bimodule connections. We then recall the recent classification of $\mathcal{O}_q(G)$-covariant connections $\nabla\colon \mathcal{E}_k \to \Omega^1_q(G/L_S) \otimes_{\mathcal{O}_q(G/L_S)} \mathcal{E}_k$. We also recall the recent quantum principal bundle description of the Heckenberger--Kolb calculi~\cite{CDOBBW}. Building directly on this material, we then prove the main result of the paper, namely that $\nabla$, and its holomorphic and anti-holomorphic summands, are bimodule connections. We also make some comments on the associated tensor product connections.

 \subsection{Bimodule principal connections}\label{section3.1}

 The question of when a principal connection determines a bimodule connection was thoroughly examined by Beggs and Majid in \cite[Section~5.4]{BeggsMajid:Leabh} and \cite[Proposition~5.54]{BeggsMajid:Leabh}, and sufficient conditions were given for a strong principal connection $\Pi$ to induce a bimodule connection for any associated module. We now prove a simple accompaniment to these results, suited to our needs below. It takes a weaker set of assumptions and allows for an elementary self-contained proof.

\begin{Theorem}\label{thm:theTheorem}
Let $\big(P, \Omega^1(P)\big)$ be a quantum principal bundle, endowed with a strong principal connection $\Pi\colon \Omega^1(P) \to \Omega^1(P)$ that is additionally assumed to be a right $B$-module map.
\begin{enumerate}\itemsep=0pt
 \item[$1.$] The associated connection $\nabla\colon \mathcal{F} \to \Omega^1(B) \otimes_B \mathcal{F}$, for each associated module $\mathcal{F}$, is a~bimodule connection.
 \item[$2.$] The associated bimodule map $\sigma\colon \mathcal{F} \otimes_B \Omega^1(B) \to \Omega^1(B) \otimes_B \mathcal{F}$ is an injective right $H$-comodule map. Moreover, if $\big(P, \Omega^1(P)\big)$ is strong then $\sigma$ is an isomorphism.
 \item[$3.$] Assume that $P$ is endowed with a left $A$-coaction making it a left $A$-comodule algebra, and moreover, assume that $\Omega^1(P)$ is a left $A$-covariant fodc. If $\Pi$ is a left $A$-comodule map, then $\sigma$ is a left $A$-comodule map.
 \end{enumerate}
\end{Theorem}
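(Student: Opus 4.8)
The plan is to produce the bimodule map $\sigma$ explicitly and read all three assertions off its construction, rather than appealing to Beggs--Majid's criteria directly. Writing $f=\sum_i p_i\otimes v_i$ for a generic element of $\mathcal{F}=P\,\square_H V$, I would introduce the multiplication map
\[
\widetilde\sigma\colon\ \mathcal{F}\otimes_B\Omega^1(B)\ \longrightarrow\ \Omega^1(P)\otimes V,\qquad f\otimes\omega\ \longmapsto\ \sum_i p_i\omega\otimes v_i .
\]
One checks at once that $\widetilde\sigma$ is well defined on $\mathcal{F}\otimes_B\Omega^1(B)$, is a $B$-bimodule map, and---using that $\Omega^1(B)$ is $H$-coinvariant---takes values in $\big(P\Omega^1(B)\big)\,\square_H V$. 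The step on which everything rests is the identity
\[
j\big(\nabla(fb)-\nabla(f)b\big)=\sum_i p_i\,\mathrm{d}b\otimes v_i=\widetilde\sigma(f\otimes\mathrm{d}b),\qquad b\in B ,
\]
which I would establish by expanding $\mathrm{d}(p_ib)=(\mathrm{d}p_i)b+p_i\,\mathrm{d}b$: the right $B$-linearity of $\Pi$ handles the first summand, while $p_i\,\mathrm{d}b\in P\Omega^1(B)\subseteq P\Omega^1(B)P=\ker\Pi$ handles the second, and strongness of $\Pi$ keeps all terms inside $\Omega^1(B)P$ so that $j$ may be applied and inverted. Since $j$ is injective, this identity forces $\mathrm{im}(\widetilde\sigma)\subseteq\mathrm{im}(j)$, so I set $\sigma:=j^{-1}\circ\widetilde\sigma$. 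Then $\sigma$ satisfies \eqref{eqn:BiC} by the displayed identity and is a $B$-bimodule map because $\widetilde\sigma$ and $j$ are; this is part~1.

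For part~2, both $j$ and $\widetilde\sigma$ are assembled from the multiplication of the right $H$-covariant calculus $\Omega^1(P)$ and from the right $H$-comodule maps $\mathrm{d}$ and $\Pi$, so each is a right $H$-comodule map and hence so is $\sigma=j^{-1}\circ\widetilde\sigma$. For injectivity I would observe that $\widetilde\sigma$ is exactly the left--right mirror of the embedding $j$ from Section~\ref{subsection:ConnFromPrinCs}, namely the multiplication map $\mathcal{F}\otimes_B\Omega^1(B)\hookrightarrow\big(P\Omega^1(B)\big)\,\square_H V$; it is therefore an embedding by the same faithful-flatness argument that proves $j$ is one, and composing with the injective $j^{-1}$ gives injectivity of $\sigma$. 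If $\big(P,\Omega^1(P)\big)$ is in addition strong, then $\Omega^1(B)P=P\Omega^1(B)$, hence also $P\Omega^1(B)P=\Omega^1(B)P=P\Omega^1(B)=:\Omega^1_{\mathrm{hor}}(P)$, and both $j$ and $\widetilde\sigma$ are now isomorphisms onto $\Omega^1_{\mathrm{hor}}(P)\,\square_H V$---the usual description of $\Omega^1(B)\otimes_B\mathcal{F}$, respectively $\mathcal{F}\otimes_B\Omega^1(B)$, as horizontal one-forms valued in the associated module---so that $\sigma=j^{-1}\circ\widetilde\sigma$ is an isomorphism.

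For part~3 I would simply note that all the data defining $\sigma$ are left $A$-colinear: the left $A$-coaction on $P$ induces one on $\mathcal{F}=P\,\square_H V$ and restricts to the assumed $A$-covariant structure on $\Omega^1(B)\subseteq\Omega^1(P)$; the multiplication of $\Omega^1(P)$ and the map $\mathrm{d}$ are $A$-colinear since $\Omega^1(P)$ is $A$-covariant, and $\Pi$ is $A$-colinear by hypothesis. Hence $\widetilde\sigma$ and $j$ are left $A$-comodule maps, and so is $\sigma=j^{-1}\circ\widetilde\sigma$. (Equivalently: $\nabla=j^{-1}\circ\big((\mathrm{id}-\Pi)\circ\mathrm{d}\otimes\mathrm{id}_V\big)$ is left $A$-colinear, and $\sigma$ is determined by $\nabla$ together with the bimodule structure via \eqref{eqn:BiC}.)

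I expect the only genuinely non-formal point to be the structural input used in part~2: that the mirror map $\widetilde\sigma$ is an embedding, and an isomorphism onto $\Omega^1_{\mathrm{hor}}(P)\,\square_H V$ in the strong case. This is a faithful-flatness/Hopf--Galois statement of the same nature as the embedding property of $j$ recalled in Section~\ref{subsection:ConnFromPrinCs}; the remainder is a mechanical unwinding of the Leibniz rule and of the defining properties of a strong principal connection.
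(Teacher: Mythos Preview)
Your proposal is correct and follows essentially the same route as the paper: you define $\sigma$ by $\sigma(f\otimes\omega)=j^{-1}\big(\sum_i p_i\omega\otimes v_i\big)$, verify \eqref{eqn:BiC} by expanding $\mathrm{d}(p_ib)$ with the Leibniz rule together with right $B$-linearity of $\Pi$ and horizontality of $p_i\,\mathrm{d}b$, and then read off parts~2 and~3 from the fact that $j$, $\widetilde\sigma$, $\mathrm{d}$, and $\Pi$ are comodule maps, with the strong case handled by identifying both $j$ and $\widetilde\sigma$ with isomorphisms onto $\big(P\Omega^1(B)\big)\,\square_H V=\big(\Omega^1(B)P\big)\,\square_H V$. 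The only cosmetic difference is that the paper asserts injectivity of $\sigma$ as ``clear from the formula'' while you make the underlying faithful-flatness input for the mirror embedding $\widetilde\sigma=j'$ explicit; this is the same fact, just stated more carefully.
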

\begin{proof}
1.~Writing $\mathcal{F}$ as $P\square_H V$, for $V\in {}^H\mathrm{Mod},$ we let $f= \sum_i p_i\otimes v_i \in \mathcal{F}$ and $\omega \in \Omega^1(B)$, we claim that a $B$-bimodule map $\sigma\colon \mathcal{F} \otimes_B \Omega^1(B) \to \Omega^1(B) \otimes_B \mathcal{F}$ is given by
\begin{gather*}
 \sigma\left(f \otimes \omega\right)= j^{-1}\bigg(\sum_i p_{i}\omega \otimes v_{i}\bigg) .
\end{gather*}
Indeed, since $\Pi$ is a strong connection, we have that
\begin{gather*}
 \sum_i p_{i}\omega= \sum_i (\mathrm{id} - \Pi)p_{i}\omega\in \Omega^1(B) P,
\end{gather*}
and so, the map $\sigma$ is well defined.
Now we prove that $(\nabla, \sigma)$ is a bimodule connection by showing that expression \eqref{eqn:BiC} holds. We have the following:
\begin{align*}
\nabla(fb)-\nabla(f)b & = \sum_i j^{-1} \big((\mathrm{id} - \Pi)\mathrm{d}( p_{i}b) \otimes v_i \big)-\sum_i j^{-1} \big((\mathrm{id} - \Pi)\mathrm{d} p_{i} \otimes v_i\big)b
\\
&=\sum_i j^{-1} \big((\mathrm{id} - \Pi)\mathrm{d}( p_{i}b) \otimes v_i \big)-\sum_i j^{-1} \big((\mathrm{id} - \Pi)\mathrm{d}(p_{i})b \otimes v_i\big).
\end{align*}
By the Leibniz rule this reduces to
\begin{gather*}
 \sum_i j^{-1} \big((\mathrm{id} - \Pi) p_{i}\mathrm{d} b\otimes v_{i}\big) =\sum_i j^{-1} \big(p_{i}\mathrm{d} b\otimes v_{i}\big)= \sigma (f\otimes \mathrm{d} b),
\end{gather*}
where the last identity follows from the fact that each summand $p_{i}\mathrm{d} b$ is an horizontal form.

2. It is clear from the above formula that $\sigma$ is injective. Covariance of the fodc and the fact that both $j^{-1}$ and $\Pi$ are left $H$-comodule maps, imply that $\sigma$ is a right $H$-comodule map. If $\big(P, \Omega^1(P)\big)$ is strong, then we have an isomorphism
\begin{align*}
\mu'\colon \ \Omega^1(B) \otimes_B P \to P\Omega^1(B)
\end{align*}
given by multiplication in $\Omega^1(P)$. We have an additional natural embedding
\begin{gather*}
j'\colon \ \mathcal{F} \otimes_B \Omega^1(B)\hookrightarrow P\Omega^1(B) \, \square_H V,
\end{gather*}
induced by the multiplication map. The composition of isomorphisms $j'^{-1}\circ\mu$ then gives an isomorphism between $\Omega^1(B) \otimes_B \mathcal{F} $ and $ \mathcal{F} \otimes_B \Omega^1(B)$, which we see is an inverse for $\sigma$.

3. In this case, as discussed in Section~\ref{subsection:ConnFromPrinCs}, the associated connection $\nabla$ will be a left $A$-comodule map. Hence it follows directly from the definition of $\sigma$ that it will also be a left $A$-comodule map.
\end{proof}

\subsection{Relative line module connections for the Heckenberger--Kolb calculi}\label{section3.2}

As established in the seminal paper \cite{HKdR}, there exist, up to isomorphism, exactly two finite-dimensional irreducible left $\mathcal{O}_q(G)$-covariant fodc over every irreducible quantum flag manifold. We denote these calculi by
\begin{gather*}
\big(\Omega^{(1,0)}_q(G/L_S), \partial\big), \qquad \big(\Omega^{(0,1)}_q(G/L_S),\overline{\partial}\big).
\end{gather*}
Moreover, we call their direct sum the \emph{Heckenberger--Kolb calculus} on $\mathcal{O}_q(G/L_S)$ and we denote it by
\[
\big(\Omega^1_q(G/L_S),\mathrm{d}:=\overline{\partial}+\partial\big) .
\]

The task of describing the left $\mathcal{O}_q(G)$-covariant connections $\nabla\colon \mathcal{E}_k \to \Omega^1_q(G/L_S) \otimes \mathcal{E}_k$ was recently undertaken by the second author and D\'iaz Garc\'ia, Krutov, Somberg, and Strung. The proof of the following result uses the theory of principal comodule algebras~\cite{TBGS} to a construct universal connection, and then quotients to the special case of the Heckenberger--Kolb calculi.

\begin{Theorem}[{\cite[Theorem 4.5]{HVBQFM}}] \label{thm:CovConn}
Let $\mathcal{O}_q(G/L_S)$ be an irreducible quantum flag manifold, then for every $k \in \mathbb{Z}$, the relative line module $\mathcal{E}_k$ admits a unique $\mathcal{O}_q(G)$-covariant connection $\nabla\colon \mathcal{E}_k \to \Omega^1_q(G/L_S) \otimes_{\mathcal{O}_q(G/L_S)} \mathcal{E}_k$.
\end{Theorem}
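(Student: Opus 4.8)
The plan is to classify the left $\mathcal{O}_q(G)$-covariant connections on each $\mathcal{E}_k$ by transporting the problem, through Takeuchi's categorical equivalence, to a purely representation-theoretic question about finite-dimensional $U_q(\mathfrak{l}_S)$-modules, and then to settle existence and uniqueness separately, exploiting respectively semisimplicity and a central-charge computation.

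First I would reformulate a left connection as a covariant splitting problem. A left connection $\nabla\colon \mathcal{E}_k \to \Omega^1_q(G/L_S) \otimes_{\mathcal{O}_q(G/L_S)} \mathcal{E}_k$ is equivalent to a left $\mathcal{O}_q(G/L_S)$-module splitting of the canonical first-order jet sequence
\[
0 \to \Omega^1_q(G/L_S) \otimes_{\mathcal{O}_q(G/L_S)} \mathcal{E}_k \to \mathcal{J}^1(\mathcal{E}_k) \to \mathcal{E}_k \to 0,
\]
and left $\mathcal{O}_q(G)$-covariance of $\nabla$ corresponds precisely to requiring this splitting to be a left $\mathcal{O}_q(G)$-comodule map. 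Since $\mathcal{E}_k$ and $\Omega^1_q(G/L_S)$ are relative Hopf modules, so is every term of the sequence, and I would apply Takeuchi's equivalence \cite{Tak} together with its monoidality. This sends the whole sequence to an exact sequence of finite-dimensional $U_q(\mathfrak{l}_S)$-modules, in which $\mathcal{E}_k$ becomes the one-dimensional character $\mathbb{C}_k$ encoding the $U_1$-grading \eqref{eqn:linebundeldecomp}, while $\Omega^1_q(G/L_S)\otimes_{\mathcal{O}_q(G/L_S)}\mathcal{E}_k$ becomes $\Lambda^1 \otimes \mathbb{C}_k$, where $\Lambda^1$ is the cotangent fibre of the Heckenberger--Kolb calculus.

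Existence then follows from semisimplicity: for generic $q$ the category of finite-dimensional type-$1$ $U_q(\mathfrak{l}_S)$-modules is semisimple, so the transported jet sequence splits, and pushing the splitting back through the equivalence yields a covariant connection. Uniqueness follows from the accompanying torsor description: the difference of two covariant connections is a covariant left $\mathcal{O}_q(G/L_S)$-module map $\mathcal{E}_k \to \Omega^1_q(G/L_S)\otimes_{\mathcal{O}_q(G/L_S)} \mathcal{E}_k$, which under Takeuchi corresponds to an element of
\[
\mathrm{Hom}_{U_q(\mathfrak{l}_S)}\big(\mathbb{C}_k,\, \Lambda^1 \otimes \mathbb{C}_k\big) \cong \mathrm{Hom}_{U_q(\mathfrak{l}_S)}\big(\mathbb{C}_0,\, \Lambda^1\big),
\]
the multiplicity of the trivial module in $\Lambda^1$, where I have used that $\mathbb{C}_k$ is invertible. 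Hence everything reduces to showing this multiplicity vanishes.

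The main obstacle is this last representation-theoretic input, which requires the explicit description of the Heckenberger--Kolb cotangent space from \cite{HK,HKdR}: the decomposition $\Lambda^1 = \Lambda^{(1,0)} \oplus \Lambda^{(0,1)}$ into holomorphic and anti-holomorphic parts, each an irreducible $U_q(\mathfrak{l}_S)$-module on which the central $U_1$ acts (by Schur's lemma) through a single nonzero scalar, of opposite sign on the two summands and matching $\partial$ and $\overline{\partial}$. Since the trivial module has $U_1$-charge $0$, neither summand can contain a copy of it, so the weight-zero, $U_q(\mathfrak{l}^{\,\mathrm{s}}_S)$-invariant subspace of $\Lambda^1$ is zero and the Hom space above vanishes, giving uniqueness. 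Applied summand by summand, the same charge argument shows that the unique $\nabla$ decomposes as a sum of connections valued in $\Omega^{(1,0)}_q(G/L_S)$ and $\Omega^{(0,1)}_q(G/L_S)$, accounting for the holomorphic and anti-holomorphic summands used later in the paper.
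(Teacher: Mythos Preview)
Your proposal is correct, but it differs in its existence argument from the method the paper attributes to \cite{HVBQFM}. There, existence is obtained constructively: the principal comodule algebra structure of $\mathcal{O}_q(G)$ over $\mathcal{O}_q(L_S)$ supplies a strong connection (in the sense of \cite{TBGS}), which induces a covariant connection on every associated module with respect to the universal calculus; one then checks that this descends to the Heckenberger--Kolb quotient. Your route instead packages the problem as a covariant splitting of the jet sequence, transports it through Takeuchi's equivalence, and invokes semisimplicity of finite-dimensional $\mathcal{O}_q(L_S)$-comodules to produce a splitting abstractly. For uniqueness, both approaches come down to the same representation-theoretic fact you isolate, namely that $\Lambda^{(1,0)}$ and $\Lambda^{(0,1)}$ are irreducible $U_q(\mathfrak{l}_S)$-modules with nonzero $U_1$-charge, so $\mathrm{Hom}_{U_q(\mathfrak{l}_S)}(\mathbb{C}_0,\Lambda^1)=0$; this is indeed the mechanism used in \cite{HVBQFM}.

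The trade-off is that the principal comodule algebra approach yields an explicit formula for $\nabla$ (the one later exploited in Section~\ref{textsection:genDets} and in the Borel--Weil applications), whereas your argument is cleaner and more self-contained, handling existence and uniqueness uniformly through the same equivalence, but at the cost of only knowing $\nabla$ exists rather than what it looks like. One small point worth making explicit in your write-up: you should note that $\Phi$ is an exact equivalence of abelian categories (so the jet sequence really does go to a short exact sequence) and that $\Phi(\mathcal{J}^1(\mathcal{E}_k))$ is finite-dimensional, so that the semisimplicity of type-$1$ modules for $q\in\mathbb{R}\setminus\{-1,0,1\}$ genuinely applies.
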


In addition to the connection $\nabla$, we can also consider the \emph{holomorphic}, and respectively \emph{anti-holomorphic, connections}
\begin{gather*}
\partial_{\mathcal{E}_k}:= \big(\mathrm{proj}^{(1,0)}\otimes \mathrm{id} \big)\circ\nabla \qquad \mathrm{and} \qquad \overline{\partial}_{\mathcal{E}_k}:=\big(\mathrm{proj}^{(0,1)}\otimes \mathrm{id}\big)\circ\nabla,
\end{gather*}
where $\mathrm{proj}^{(1,0)}$ denotes the projection onto $\Omega^{(1,0)}_q(G/L_S)$, and similarly for $\mathrm{proj}^{(0,1)}$. It clearly follows from Theorem~\ref{thm:CovConn} that $\partial_{\mathcal{E}_k}$, and $\overline{\partial}_{\mathcal{E}_k}$, are the unique covariant connections for $\mathcal{E}_k$ with respect to the fodc $\Omega^{(1,0)}_q(G/L_S)$, and respectively $\Omega^{(0,1)}_q(G/L_S)$.

\subsection{A quantum principal bundle} \label{subsection:QPB}

In \cite{HKdR}, with a view to calculating higher order relations, the fodc $\Omega^{(1,0)}_q(G/L_S)$ was obtained as the restriction of a~left~$\mathcal{O}_q(G)$-covariant fodc over the quantum Poisson homogeneous space $\mathcal{O}_q(G/L^{\mathrm{s}}_S)$. The explicit construction involved a detailed $R$-matrix argument, and we refer the interested reader to \cite{CDOBBW} for a summary in the notation of this paper. The following theorem summarises the details relevant to this paper. The statement of the theorem makes use of~$S_q[G/L_S]$, the \emph{quantum homogeneous coordinate ring} of $G/L_S$, which is to say, the sub-algebra of~$\mathcal{O}_q(G)$ generated by the elements~$z_i$, for $i=1, \dots, N$.

\begin{Theorem}[{\cite[Section~3.2]{HKdR}}]
There exists a left $\mathcal{O}_q(G)$-covariant fodc
$\Omega^{(0,1)}_q\big(G/L^{\mathrm{s}}_S,\adel\big)$
such that
\begin{enumerate}\itemsep=0pt
\item[$(1)$] $\adel p = 0$, for all $p \in S_q[G/L_S]$,
\item[$(2)$] the fodc is right $\mathcal{O}(U_1)$-covariant,
\item[$(3)$] the pair $\big(\mathcal{O}_q\big(G/L^{\mathrm{s}}_S\big),   \Omega^{(0,1)}_q\big(G/L^{\mathrm{s}}_S\big)\big)$ is a quantum principal bundle,
\item[$(4)$] the zero map is a strong principal connection for the bundle.
\end{enumerate}
\end{Theorem}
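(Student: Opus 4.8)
The plan is to assemble the statement from the Heckenberger--Kolb construction of \cite[Section~3.2.3]{HKdR} together with its quantum principal bundle reformulation in \cite{CDOBBW}, and to verify the four listed properties in turn. First I would recall the construction: via the correspondence between left $\mathcal{O}_q(G)$-covariant fodc over the quantum homogeneous space $\qphs = {}^{\usslevi}\mathcal{O}_q(G)$ and their defining sub-bimodules $N \subseteq \Omega^1_u\big(\qphs\big)$ (equivalently, $\usslevi$-submodules of the cotangent space), Heckenberger and Kolb single out, by an $R$-matrix argument, the sub-bimodule $N$ whose quotient restricts on the subalgebra $\mathcal{O}_q(G/L_S)$ to the anti-holomorphic calculus $\Omega^{(0,1)}_q(G/L_S)$; this quotient is by definition $\Omega^{(0,1)}_q\big(G/L^{\mathrm{s}}_S\big)$, and left $\mathcal{O}_q(G)$-covariance is part of their result. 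I would record the explicit weight-vector generators of $N$, since everything below relies on them.

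For (1), the Heckenberger--Kolb construction equips $\qphs$ with a bigraded differential $\mathrm{d} = \del + \adel$ refining the Dolbeault splitting of the base, in which $\del q$ and $\adel q$ are the components of $\mathrm{d}q$ in two complementary summands of the cotangent space. Since each generator $z_i = c^{V_{\varpi_x}}_{f_i,v_N}$ of $S_q[G/L_S]$ is a matrix coefficient built from the highest weight vector $v_N$, its weight forces $\mathrm{d}z_i$ into the holomorphic summand, so $\adel z_i = 0$; as $\adel$ is a derivation this extends to all of $S_q[G/L_S]$. This is the direct generalisation of the computation for the standard Podle\'s sphere, and amounts to checking the weights of the generators of $N$. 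For (2), the right $\mathcal{O}(U_1)$-coaction on $\qphs$ is the restriction of $(\mathrm{id}\otimes\pi_S)\circ\Delta$ and commutes with the left $\mathcal{O}_q(G)$-coaction; hence $N$, being a left $\mathcal{O}_q(G)$-subcomodule determined by cotangent-space data, is $\mathbb{Z}$-homogeneous for the grading \eqref{eqn:linebundeldecomp}, which is exactly the assertion that $\Delta_R$ is well defined on $\Omega^{(0,1)}_q\big(G/L^{\mathrm{s}}_S\big)$.

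For (3), take as base $B := \mathcal{O}_q(G/L_S)$ and as structure Hopf algebra $\mathcal{O}(U_1)$. That $\qphs$ is a right $\mathcal{O}(U_1)$-principal comodule algebra over $B$ is the statement that the grading \eqref{eqn:linebundeldecomp} is strong, recalled from \cite[Section~4]{GAPP}: strongness gives the Hopf--Galois condition, and faithful flatness of $\qphs$ over $B$ is recorded there as well. It then remains to check that $\mathrm{ver}(N) = \qphs \otimes I$ for an $\mathrm{Ad}$-invariant right ideal $I \subseteq \mathcal{O}(U_1)^+$; since $\mathcal{O}(U_1)$ is commutative and cocommutative, $\mathrm{Ad}$ is trivial, so $\mathrm{Ad}$-invariance is automatic, and one is left with a direct computation identifying $I = \mathcal{O}(U_1)^+$, i.e.\ that $\Omega^{(0,1)}_q\big(G/L^{\mathrm{s}}_S\big)$ carries no nontrivial vertical part --- which is also what underlies~(4).

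Finally, for (4): the zero map is manifestly a left $\qphs$-module, right $\mathcal{O}(U_1)$-comodule projection with $\ker(0) = \Omega^{(0,1)}_q\big(G/L^{\mathrm{s}}_S\big)$, so it is a principal connection precisely when $\Omega^{(0,1)}_q\big(G/L^{\mathrm{s}}_S\big) = P\,\Omega^{(0,1)}_q(G/L_S)\,P$ with $P := \qphs$, and it is strong precisely when $\adel P \subseteq \Omega^{(0,1)}_q(G/L_S)\,P$; both say that the calculus is entirely horizontal. The plan here is to combine three ingredients: (i) since $\adel z_i = 0$ by (1), $\Omega^{(0,1)}_q\big(G/L^{\mathrm{s}}_S\big)$ is generated as a $P$-bimodule by the finitely many forms $\adel \overline{z}_i$; (ii) the bundle is strong, $\Omega^{(0,1)}_q(G/L_S)\,P = P\,\Omega^{(0,1)}_q(G/L_S)$ (established as part of the $R$-matrix analysis of \cite{CDOBBW}), which turns bimodule generation into left-module generation and makes the two displayed conditions coincide; and (iii) the strong-grading relations $\mathcal{E}_{\pm1}\mathcal{E}_{\mp1} = B$ together with $\adel z_i = 0$, which one uses to rewrite each $\adel \overline{z}_i$ in terms of base forms. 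I expect step (iii) --- showing the anti-holomorphic generators are horizontal --- to be the main obstacle: unlike $\adel z_i$, the forms $\adel \overline{z}_i$ are nonzero, and placing them inside $\Omega^{(0,1)}_q(G/L_S)\,P$ is exactly where the detailed structure of the Heckenberger--Kolb calculus (the $R$-matrix identities of \cite{HKdR,CDOBBW}) enters; the weight-space verification in (1) that $\adel$ annihilates $S_q[G/L_S]$ is a close second.
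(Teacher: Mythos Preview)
Your plan is sound and, in fact, more detailed than what the paper does: the paper treats this theorem as a summary of external results and simply cites \cite[Section~3.2]{HKdR} for (1), \cite[Lemma~5.5]{CDOBBW} for (2), and \cite[Proposition~5.6]{CDOBBW} (via the principal-pair machinery of \cite{BwGrass,GAPP}) for (3)--(4), with no formal proof block. Your reconstruction of the arguments behind those citations is correct in outline; the weight argument for~(1), the $\mathbb{Z}$-homogeneity argument for~(2), and the strong-grading/Hopf--Galois reduction for~(3) are all what one finds on unpacking those references.

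Two small corrections on~(4). First, you invoke full strongness of the bundle ($\Omega^{(0,1)}_q(G/L_S)\,P = P\,\Omega^{(0,1)}_q(G/L_S)$) as coming from \cite{CDOBBW}, but only the inclusion $P\,\Omega^{(0,1)}_q(G/L_S) \subseteq \Omega^{(0,1)}_q(G/L_S)\,P$ is established there; the reverse inclusion is proved separately in the present paper. Fortunately, only the \cite{CDOBBW} direction is needed to propagate $\adel p \in \Omega^{(0,1)}_q(G/L_S)\,P$ through products via Leibniz, so your argument for~(4) goes through. Second, you overestimate the difficulty of your step~(iii): once~(1) is known, the generalised determinant relation $\sum_a f_a v_a = 1$ (with $f_a \in S_q[G/L_S]$, so $\adel f_a = 0$) gives directly
\[
\adel \overline{z}_j \;=\; \sum_a \adel(\overline{z}_j f_a)\,v_a \;\in\; \Omega^{(0,1)}_q(G/L_S)\,P,
\]
since $\overline{z}_j f_a$ has degree~$0$ and hence lies in $\mathcal{O}_q(G/L_S)$. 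No further $R$-matrix input is required here --- this is exactly the manoeuvre the paper itself uses (in the later strongness lemma) for the opposite inclusion. So the genuine work sits in~(1) and in the construction of the calculus, not in~(iii).
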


The first point of the theorem follows from the construction of the fodc given in \cite[Section~3.2]{HKdR}. The right $\mathcal{O}(U_1)$-covariance was established by the authors and D\'iaz Garc\'ia in \cite[Lemma~5.5]{CDOBBW}. The quantum principal bundle property was also established by the authors and D\'iaz Garc\'ia in \cite[Proposition~5.6]{CDOBBW}, where the general theory of principal pairs (as introduced in~\cite{BwGrass,GAPP}) was used. An analogous presentation of $\Omega^{(0,1)}_q(G/L_S)$ was also given in \cite[Section~3.2]{HKdR}, and the right $\mathcal{O}(U_1)$-covariance, and the quantum principal bundle property also holds for this fodc.

\subsection{The bimodule property}\label{section3.4}

In this section we observe that the recent results collected in the subsections above are enough to imply that the line module connections are bimodule connections. We also show that the associated bimodule maps are invertible, we necessitates the following preliminary lemma.

\begin{lem}The quantum principal bundles
\begin{gather*}
\big(\mathcal{O}_q\big(G/L_S^{\mathrm{s}}\big),  \Omega^{(0,1)}_q\big(G/L_S^{\mathrm{s}}\big)\big), \qquad \big(\mathcal{O}_q\big(G/L_S^{\mathrm{s}}\big),   \Omega^{(1,0)}_q\big(G/L_S^{\mathrm{s}}\big)\big),
\end{gather*}
are strong.
\end{lem}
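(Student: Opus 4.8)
The plan is to establish the defining equation $\Omega^1(B)P = P\Omega^1(B)$ for each of the two first-order calculi, where $B = \mathcal{O}_q(G/L_S)$ and $P = \mathcal{O}_q(G/L^{\mathrm{s}}_S)$, by exploiting the $\mathbb{Z}$-grading \eqref{eqn:linebundeldecomp} together with the two key properties of the calculi $\Omega^{(0,1)}_q(G/L^{\mathrm{s}}_S)$ and $\Omega^{(1,0)}_q(G/L^{\mathrm{s}}_S)$ recalled above: they are right $\mathcal{O}(U_1)$-covariant (so the total calculus is itself $\mathbb{Z}$-graded, with $\mathrm{d}$, $\del$, $\adel$ all degree-zero), and $\adel$ (respectively $\del$) annihilates $S_q[G/L_S]$ (respectively $\overline{S_q[G/L_S]}$). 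Since the degree-zero part $\mathcal{E}_0$ is precisely $B$, it suffices to work degree by degree and show that both $\Omega^1(B)\mathcal{E}_k$ and $\mathcal{E}_k\Omega^1(B)$ equal the degree-$k$ component $\Omega^1_q(G/L^{\mathrm{s}}_S)_k$ of the relevant calculus.

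First I would treat $\Omega^{(0,1)}_q(G/L^{\mathrm{s}}_S)$. The calculus is generated as a left $P$-module by $\adel P$; using $\adel\mathcal{E}_k$ for the degree-$k$ piece and the fact that $P$ is strongly graded, one reduces to understanding $\adel \mathcal{E}_1$ and $\adel\mathcal{E}_{-1}$, i.e. $\adel z_i$ and $\adel\overline{z}_j$. By point $(1)$ of the cited theorem, $\adel z_i = 0$, so the degree-$(k{+}1)$ part of the calculus is built, via the left and right $\mathcal{E}_1$-actions, out of $\adel$ applied to degree-$(\leq k)$ elements; propagating this and using strongness of the grading (so that $\mathcal{E}_1\mathcal{E}_{-1} = B = \mathcal{E}_{-1}\mathcal{E}_1$) one shows both $\Omega^1(B)P$ and $P\Omega^1(B)$ exhaust $\Omega^{(0,1)}_q(G/L^{\mathrm{s}}_S)$. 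Concretely, for $\omega \in \Omega^1(B)$ and $p\in\mathcal{E}_k$ one writes $p = \sum \overline{z}_{i_1}\cdots$ (or a product of $z$'s, depending on the sign of $k$) times an element of $B$, moves the differential-form factor across using the Leibniz rule and $\adel z_i = 0$ to cancel the unwanted boundary terms, and lands in $P\Omega^1(B)$; the reverse inclusion is symmetric. The anti-holomorphic case $\Omega^{(1,0)}_q(G/L^{\mathrm{s}}_S)$ is handled identically after replacing $\adel$ by $\del$ and the generators $z_i$ by $\overline{z}_j$, using that $\del$ annihilates the conjugate homogeneous coordinate ring.

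The main obstacle I anticipate is the bookkeeping needed to move a one-form past a homogeneous-coordinate generator: one must verify that the Leibniz-rule correction terms $(\del \text{ or }\adel)$ of the $z_i$ or $\overline z_j$ are exactly the ones killed by properties $(1)$, and that the remaining terms stay inside $\Omega^1(B)\cdot P$ rather than only inside the larger $P$-bimodule they generate. This is really a statement that the canonical surjection from the universal to the Heckenberger--Kolb relations is compatible with the grading in the strong sense required; the cleanest route is probably to invoke the principal-pair formalism of \cite{BwGrass,GAPP} already used in \cite{CDOBBW} to establish the quantum principal bundle property, since strongness of the bundle is a standard consequence in that framework once one knows the grading is strong and $\adel$ (resp.\ $\del$) vanishes on $S_q[G/L_S]$ (resp.\ its conjugate). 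If that formalism gives strongness directly, the proof collapses to a short citation plus the degree-zero identification $\mathcal{E}_0 = \mathcal{O}_q(G/L_S)$.
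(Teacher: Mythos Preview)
Your proposal is correct and follows essentially the same route as the paper: both arguments use the Leibniz rule together with $\adel z_i = 0$ (respectively $\del \overline{z}_j = 0$) and the strong grading to move forms across homogeneous generators. The paper makes your ``bookkeeping'' step concrete by inserting the generalised determinant identity $\sum_j v'_j f'_j = 1$ with $f'_j \in S_q[G/L_S]$ (which is exactly the content of $\mathcal{E}_{-1}\mathcal{E}_1 = B$), and cites one of the two inclusions from \cite[Lemma~5.7]{CDOBBW} rather than invoking the principal-pair machinery abstractly.
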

\begin{proof}
It was shown in \cite[Lemma 5.7]{CDOBBW} that
\[
\qphs\Omega^{(0,1)}_q(G/L_S) \subseteq \Omega^{(0,1)}_q(G/L_S)\qphs.
\]
We prove this lemma by showing that an analogous argument establishes the opposite inclusion.

An arbitrary form in $\Omega^{(0,1)}_q(G/L_S)$ is a sum of elements of the form $b'\,\adel b$, for some $b',b \in \mathcal{O}_q(G/L_S)$. Consider now the form
\begin{gather*}
b'\big(\adel b\big)z_i \in \Omega^{(0,1)}_q(G/L_S)\qphs , \qquad \textrm{for any } i = 1, \dots, N.
\end{gather*}
It follows from the observations of Section~\ref{textsection:genDets} that there exist elements $f_j'\in S_q[G/L_S]$ and $v_j' \in S_q\big[G/L^{\mathrm{op}}_S\big]$ such that $\sum_{j=1}^N v_j'f_j'=1$. Then it follows from the Leibniz rule that
\begin{gather*}
b'\big(\adel b\big)\overline{z}_i = b'\adel(b\overline{z}_i) - b'b\adel(\overline{z}_i) = \sum_{j=1}^N b'v'_jf'_j\adel(b\overline{z}_i) - \sum_{j=1}^N b'bv'_jf'_j\adel(\overline{z}_i).
\end{gather*}
Moreover, since $\adel f_j = 0$, we have that
\begin{gather*}
b'\big(\adel b\big)\overline{z}_i = \sum_{j=1}^N b'v'_j\adel(f'_jb\overline{z}_i) - \sum_{j=1}^N b'bv'_j\adel(f'_j\overline{z}_i).
\end{gather*}
Thus we see that $b'(\adel b)\overline{z}_i$ is an element of $\qphs\Omega^{(0,1)}_q(G/L_S)$, for any $i = 1, \dots, N$. Thus we can conclude that
\begin{gather*}
\qphs \Omega^{(0,1)}_q(G/L_S) \subseteq \Omega^{(0,1)}_q(G/L_S)\qphs,
\end{gather*}
and hence that the quantum principal bundle is strong.
\end{proof}

\begin{Theorem}For each line module $\mathcal{E}_k$, the connections $\nabla, \partial_{\mathcal{E}_k}$ and $\overline{\partial}_{\mathcal{E}_k}$ are bimodule connections. Moreover, the associated bimodule maps are left $\mathcal{O}_q(G)$-comodule isomorphisms.
\end{Theorem}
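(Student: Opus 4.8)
The plan is to deduce the theorem directly from Theorem~\ref{thm:theTheorem}, fed with the quantum principal bundle description of the Heckenberger--Kolb calculi recalled above. Concretely, I would set $P := \mathcal{O}_q\big(G/L^{\mathrm{s}}_S\big)$, $H := \mathcal{O}(U_1)$, $A := \mathcal{O}_q(G)$, and $B := \mathcal{O}_q(G/L_S)$, so that by the grading \eqref{eqn:linebundeldecomp} and the discussion of Section~\ref{section2.6} each relative line module $\mathcal{E}_k$ is an associated module of the form $P \,\square_H V_k$ for a one-dimensional $\mathcal{O}(U_1)$-comodule $V_k$. The key observation is that the zero map is a strong principal connection for each of the bundles $\big(P, \Omega^{(0,1)}_q(G/L^{\mathrm{s}}_S)\big)$ and $\big(P, \Omega^{(1,0)}_q(G/L^{\mathrm{s}}_S)\big)$, that these bundles are strong by the preceding lemma, that the zero map is trivially a right $B$-module map and trivially a left $\mathcal{O}_q(G)$-comodule map, and that each of $\Omega^{(0,1)}_q(G/L^{\mathrm{s}}_S)$ and $\Omega^{(1,0)}_q(G/L^{\mathrm{s}}_S)$ is left $\mathcal{O}_q(G)$-covariant. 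Hence all three parts of Theorem~\ref{thm:theTheorem} apply with $\Pi = 0$.

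Carrying this out in the anti-holomorphic case, I would first invoke Theorem~\ref{thm:theTheorem}(1) to see that the connection on $\mathcal{E}_k$ associated (in the sense of Section~\ref{subsection:ConnFromPrinCs}) to the bundle $\big(P, \Omega^{(0,1)}_q(G/L^{\mathrm{s}}_S)\big)$ and the zero principal connection is a bimodule connection with some bimodule map $\sigma^{(0,1)}$; then Theorem~\ref{thm:theTheorem}(2), together with strongness from the preceding lemma, gives that $\sigma^{(0,1)}$ is an isomorphism, and Theorem~\ref{thm:theTheorem}(3) gives that it is a left $\mathcal{O}_q(G)$-comodule map. It then remains to identify this associated connection with $\overline{\partial}_{\mathcal{E}_k}$: the restriction of $\Omega^{(0,1)}_q(G/L^{\mathrm{s}}_S)$ to $B$ is precisely $\Omega^{(0,1)}_q(G/L_S)$ by the construction of \cite{HKdR}, and the associated connection is left $\mathcal{O}_q(G)$-covariant by Section~\ref{subsection:ConnFromPrinCs}, so it coincides with $\overline{\partial}_{\mathcal{E}_k}$ by the uniqueness clause of Theorem~\ref{thm:CovConn} and the remark following it (this is the identification already recorded in \cite{CDOBBW}). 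The holomorphic case is word-for-word the same with $\Omega^{(0,1)}_q(G/L^{\mathrm{s}}_S)$ replaced by $\Omega^{(1,0)}_q(G/L^{\mathrm{s}}_S)$, yielding that $\partial_{\mathcal{E}_k}$ is a bimodule connection whose bimodule map $\sigma^{(1,0)}$ is a left $\mathcal{O}_q(G)$-comodule isomorphism. Finally, for $\nabla$ itself I would use the decomposition $\Omega^1_q(G/L_S) = \Omega^{(1,0)}_q(G/L_S) \oplus \Omega^{(0,1)}_q(G/L_S)$: since $\nabla = \partial_{\mathcal{E}_k} + \overline{\partial}_{\mathcal{E}_k}$ and $\mathrm{d} = \partial + \overline{\partial}$, the defining identity \eqref{eqn:BiC} shows that $\big(\nabla, \sigma^{(1,0)} \oplus \sigma^{(0,1)}\big)$ is a bimodule connection, and a direct sum of left $\mathcal{O}_q(G)$-comodule isomorphisms is again one.

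The substance of the proof lies in the ingredients it consumes --- the quantum principal bundle presentation of the Heckenberger--Kolb calculi and the strongness lemma immediately preceding --- rather than in the assembly itself, which is essentially mechanical. If I had to single out a delicate step within the argument, it would be the identification of the associated connection produced by the zero principal connection with the covariant connection $\nabla$ of Theorem~\ref{thm:CovConn}: this is not automatic, but it is forced by the uniqueness in Theorem~\ref{thm:CovConn} once one checks that the associated connection is $\mathcal{O}_q(G)$-covariant and is taken with respect to the correctly restricted calculus. A secondary point requiring care is that one should work on the holomorphic and anti-holomorphic sub-bundles separately and only afterwards recombine, rather than trying to organise a single quantum principal bundle with total calculus $\Omega^1_q(G/L^{\mathrm{s}}_S)$, since verifying the bundle axioms for the direct-sum calculus directly would be more cumbersome than the route above.
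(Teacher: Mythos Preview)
Your proposal is correct and follows essentially the same route as the paper's own proof: both apply Theorem~\ref{thm:theTheorem} with $\Pi = 0$ to the quantum principal bundles $\big(\mathcal{O}_q(G/L^{\mathrm{s}}_S),\Omega^{(0,1)}_q(G/L^{\mathrm{s}}_S)\big)$ and $\big(\mathcal{O}_q(G/L^{\mathrm{s}}_S),\Omega^{(1,0)}_q(G/L^{\mathrm{s}}_S)\big)$ separately, invoke the strongness lemma for invertibility, and then combine for $\nabla$. You are slightly more explicit than the paper in justifying why the associated connection coincides with $\overline{\partial}_{\mathcal{E}_k}$ (via the uniqueness clause of Theorem~\ref{thm:CovConn}), whereas the paper simply cites this identification as already established in Section~\ref{subsection:QPB} and \cite{CDOBBW}; this is a harmless elaboration rather than a different argument.
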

\begin{proof}Recall from Section~\ref{subsection:QPB} that the zero map on $\Omega^1_q\big(G/L^{\,\mathrm{s}}_S\big)$ is a strong principal connection that realises the line modules connections $\adel_{\mathcal{E}_k}$ as associated connections. Since the zero map is clearly a right $\mathcal{O}_q(G/L_S)$-module map, we can conclude from Theorem \ref{thm:theTheorem} that each $\adel_{\mathcal{E}_k}$ is a~bimodule connection. Moreover, since the zero map is clearly a right $\mathcal{O}_q(G)$-comodule map, and by the above lemma the quantum principal bundle is strong, we can conclude from Theorem~\ref{thm:theTheorem} that the associated bimodule map $\sigma_k$ is a left $\mathcal{O}_q(G)$-comodule isomorphism.

A similar argument establishes that the connection $\del_{\mathcal{E}_k}$ is a bimodule conection with an invertible bimodule map. Moreover, these two results together imply that $\nabla$ is a bimodule connection with an invertible bimodule map.
\end{proof}

\begin{Remark}The decomposition of the Heckenberger--Kolb calculus $\Omega^1_q(G/L_S)$ into its $(1,0)$ and $(0,1)$ summands extends to a complex structure $\Omega^{(\bullet,\bullet)}$ on the maximal prolongation of $\Omega^1_q(G/L_S)$. Moreover, as shown in \cite[Theorem~4.5]{HVBQFM}, with respect to the complex structure $\Omega^{(\bullet,\bullet)}$ the $(0,1)$-connections $\adel_{\mathcal{E}_k}$ extend to holomorphic structures for the line bundles $\mathcal{E}_k$. Thus we see that each $\big(\mathcal{E}_k,\adel_{\mathcal{E}_k}\big)$ is a \emph{holomorphic bimodule} in the sense of Beggs and Majid \cite[Definition~7.14]{BeggsMajid:Leabh}.
\end{Remark}

\subsection{Tensor products of connections} \label{subsection:tensorprods}

For $k \in \mathbb{Z}_{>0}$, let $\mathcal{E}_k$ be the corresponding line module over the irreducible quantum flag manifold $\mathcal{O}_q(G/L_S)$. Since the \mbox{$\mathbb{Z}$-grading} given in \eqref{eqn:linebundeldecomp} is strong, it follows from \cite[Corollary 3.1]{FreddyO}, that multiplication gives an isomorphism
\begin{align*}
\mathcal{E}_k \simeq \underbrace{\mathcal{E}_1 \otimes_{\mathcal{O}_q(G/L_S)} \cdots \otimes_{\mathcal{O}_q(G/L_S)} \mathcal{E}_1}_{k\text{-times}},
\end{align*}
which is of course a left $\mathcal{O}_q(G)$-comodule map. Since the connection $\nabla$ on $\mathcal{E}_1$ is a bimodule connection, we can build a bimodule connection for $\mathcal{E}_k$ through repeated iterations of formula~\eqref{eqn:bimcontenpr}.

\begin{prop}The connections
\begin{enumerate}\itemsep=0pt
\item[$(1)$] $\adel_{\mathcal{E}_{k + l}}\colon \mathcal{E}_{k + l} \to \Omega^{(0,1)}_q(G/L_S) \otimes_{\mathcal{O}_q(G/L_S)} \mathcal{E}_{k +l}$,
\item[$(2)$] $\adel_{\mathcal{E}_{k} \otimes \mathcal{E}_l}\colon \mathcal{E}_k \otimes \mathcal{E}_l \to \Omega^{(0,1)}_q(G/L_S) \otimes_{\mathcal{O}_q(G/L_S)} \mathcal{E}_k \otimes_{\mathcal{O}_q(G/L_S)} \mathcal{E}_l$
\end{enumerate}
are isomorphic. Moreover, the corresponding holomorphic connections are isomorphic.
\end{prop}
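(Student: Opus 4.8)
The plan is to reduce the proposition to the uniqueness statement of Theorem~\ref{thm:CovConn}. Write $B := \mathcal{O}_q(G/L_S)$. Recall from the discussion preceding the proposition that multiplication furnishes an isomorphism of $B$-bimodules $m\colon \mathcal{E}_k \otimes_B \mathcal{E}_l \to \mathcal{E}_{k+l}$, which is moreover a left $\mathcal{O}_q(G)$-comodule map; hence its inverse $m^{-1}$ is both $B$-bilinear and a left $\mathcal{O}_q(G)$-comodule map. The strategy is to transport the tensor product connection $\adel_{\mathcal{E}_k \otimes \mathcal{E}_l}$ along $m$ and to identify the result, by uniqueness, with $\adel_{\mathcal{E}_{k+l}}$.

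The first step is to check that $\adel_{\mathcal{E}_k \otimes \mathcal{E}_l}$ is left $\mathcal{O}_q(G)$-covariant. The connection $\nabla_{\mathcal{E}_k \otimes \mathcal{E}_l}$ built from $\nabla_{\mathcal{E}_k}$ and $\nabla_{\mathcal{E}_l}$ through~\eqref{eqn:bimcontenpr} is a sum and composite of the maps $\nabla_{\mathcal{E}_k}$, $\nabla_{\mathcal{E}_l}$ and $\sigma_k$; each of these is a left $\mathcal{O}_q(G)$-comodule map --- the connections by Theorem~\ref{thm:CovConn}, and the bimodule map $\sigma_k$ by the main theorem of this section (equivalently, by part~3 of Theorem~\ref{thm:theTheorem} applied with the zero principal connection) --- and so $\nabla_{\mathcal{E}_k \otimes \mathcal{E}_l}$ is covariant. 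Since the decomposition $\Omega^1_q(G/L_S) = \Omega^{(1,0)}_q(G/L_S) \oplus \Omega^{(0,1)}_q(G/L_S)$ is $\mathcal{O}_q(G)$-covariant, the projections $\mathrm{proj}^{(1,0)}$ and $\mathrm{proj}^{(0,1)}$ are comodule maps, whence $\adel_{\mathcal{E}_k \otimes \mathcal{E}_l} = (\mathrm{proj}^{(0,1)} \otimes \mathrm{id}) \circ \nabla_{\mathcal{E}_k \otimes \mathcal{E}_l}$ and its holomorphic analogue $\del_{\mathcal{E}_k \otimes \mathcal{E}_l}$ are covariant connections for $\Omega^{(0,1)}_q(G/L_S)$ and $\Omega^{(1,0)}_q(G/L_S)$ respectively.

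The second step is the transport. Set
\[
\widetilde{\nabla} := \big(\mathrm{id}_{\Omega^{(0,1)}_q(G/L_S)} \otimes m\big) \circ \adel_{\mathcal{E}_k \otimes \mathcal{E}_l} \circ m^{-1} \colon \ \mathcal{E}_{k+l} \to \Omega^{(0,1)}_q(G/L_S) \otimes_B \mathcal{E}_{k+l}.
\]
Because $m$ and $m^{-1}$ are left $B$-linear, a one-line computation from the left Leibniz rule for $\adel_{\mathcal{E}_k \otimes \mathcal{E}_l}$ shows that $\widetilde{\nabla}$ obeys the left Leibniz rule, so it is a connection on $\mathcal{E}_{k+l}$ for the fodc $\Omega^{(0,1)}_q(G/L_S)$; and as a composite of left $\mathcal{O}_q(G)$-comodule maps it is covariant. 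The uniqueness clause of Theorem~\ref{thm:CovConn}, applied to $\Omega^{(0,1)}_q(G/L_S)$ (for which $\adel_{\mathcal{E}_{k+l}}$ is the unique covariant connection), then forces $\widetilde{\nabla} = \adel_{\mathcal{E}_{k+l}}$. Unwinding the definition, this is exactly the identity $\big(\mathrm{id} \otimes m\big) \circ \adel_{\mathcal{E}_k \otimes \mathcal{E}_l} = \adel_{\mathcal{E}_{k+l}} \circ m$, i.e.\ the commuting square that exhibits $m$ as an isomorphism of connections in the sense of Section~\ref{section2.1}. Repeating the argument with $\del$ and $\Omega^{(1,0)}_q(G/L_S)$ in place of $\adel$ and $\Omega^{(0,1)}_q(G/L_S)$ yields the statement for the holomorphic connections.

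All the individual verifications are routine; the one input that genuinely relies on the earlier part of the paper is the covariance of $\adel_{\mathcal{E}_k \otimes \mathcal{E}_l}$, which in turn rests on the fact --- established in the main theorem of this section --- that the associated bimodule map $\sigma_k$ is a left $\mathcal{O}_q(G)$-comodule map. The only other point needing a little care is the bookkeeping of the tensor products over $B$ and the observation that $m^{-1}$ inherits $B$-bilinearity and comodule-linearity from $m$; I do not expect a genuine obstacle here.
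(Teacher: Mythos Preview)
Your proof is correct and follows essentially the same approach as the paper: establish that the tensor product connection is left $\mathcal{O}_q(G)$-covariant (using that $\sigma_k$ is a comodule map, from the main theorem of the section), then invoke the uniqueness clause of Theorem~\ref{thm:CovConn}. Your version is simply more explicit about the transport along the multiplication isomorphism $m$ and the Leibniz verification, whereas the paper compresses this into a single sentence.
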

\begin{proof}Since the associated bimodule map $\sigma_1$ is a left $\mathcal{O}_q(G)$-comodule map, the tensor product connection for $\mathcal{E}_k \otimes_{\mathcal{O}_q(G/L_S)} \mathcal{E}_l$ is left $\mathcal{O}_q(G)$-covariant. Theorem \ref{thm:CovConn} implies that this connection must be isomorphic to $\adel_{\mathcal{E}_{k+l}}$. An analogous construction exists for the line modules $\mathcal{E}_{-k}$, for all $k \in \mathbb{Z}$, and for the holomorphic connections $\del_{\mathcal{E}_{k+l}}$ and $\del_{\mathcal{E}_{k} \otimes \mathcal{E}_l}$.
\end{proof}


\section{Some explicit descriptions of the associated bimodule maps}\label{section4}

In this section we give three complementary presentations of the bimodule map associated to the connection $\adel_{\mathcal{E}_k}$, first in terms of generalised quantum determinants, then in terms of the FRT presentation of $\mathcal{O}_q(G)$, and finally in terms of Takeuchi's categorical equivalence for relative Hopf modules.

\subsection{Generalised determinant relations} \label{textsection:genDets}

The elements $z_{N}^k$, and $\overline{z}_{N}^k$, are highest weight vectors of weight $k\varpi_s$, and $-kw_0(\varpi_s)$. Thus we have a dual pair of irreducible $U_q(\mathfrak{g})$-modules,
\begin{gather*}
V_k:= U_q(\mathfrak{g})z^k_N, \qquad {}^{\vee}V_k := U_q(\mathfrak{g})\overline{z}_N^k.
\end{gather*}
Since both $V_k \otimes V_k^{\vee}$ and ${}^{\vee}V_k \otimes V_k$ contain a copy of the trivial module, we must have a family of elements
\begin{gather*} 
\{f_{i}, \, f'_{i}\}_{i=1}^{N} \subseteq V_k, \qquad \{v_i, \, v'_i\}_{i=1}^{N} \subseteq {}^{\vee}V_k,
\end{gather*}
satisfying
\begin{gather}\label{eqn:genDet}
\sum_{i=1}^{N} f_iv_i = 1, \qquad \sum_{i=1}^{N} v'_if'_i = 1.
\end{gather}
It follows from the construction of the Heckenberger--Kolb calculus $\Omega^{(0,1)}_q(G/L_S^{\,\mathrm{s}})$ that
\begin{align} \label{eqn:holof}
\adel f_i = \adel f'_i = 0, \qquad \textrm{for all } i = 1, \dots, N,
\end{align}
a fact that will be used in the proof of the proposition below. (Indeed this fact is a crucial ingredient in the proof of the noncommutative Borel--Weil theorem for $\mathcal{O}_q(G/L_S)$ given in \cite{CDOBBW}.)

\begin{prop}
For $k \in \mathbb{Z}_{> 0}$, it holds that
\begin{enumerate}\itemsep=0pt
\item[$1.$] The bimodule map
\[
\sigma_k\colon \ \mathcal{E}_k \otimes_{\mathcal{O}_q(G/L_S)} \Omega^{(0,1)}_q(G/L_S) \to \Omega^{(0,1)}_q(G/L_S) \otimes_{\mathcal{O}_q(G/L_S)} \mathcal{E}_{k},
\]
associated to the connection $\adel_{\mathcal{E}_{k}}$, satisfies
\begin{gather*}
\sigma_k\big(e \otimes \adel b\big) = \sum_{i=1}^{N} \adel(ebv'_i) \otimes f'_i - \sum_{i=1}^{N} \adel(ev'_i) \otimes f'_ib.
\end{gather*}
Moreover, the inverse map $\sigma^{-1}$ satisfies
\begin{gather*}
\sigma^{-1}_k\big(\adel b \otimes e\big) = \sum_{i=1}^N f_i \otimes \adel(v_ibe) - \sum_{i=1}^N bf_i \otimes \adel(v_ie),
\end{gather*}

\item[$2.$] The bimodule map
\[
\sigma_{-k}\colon \ \mathcal{E}_{-k} \otimes_{\mathcal{O}_q(G/L_S)} \Omega^{(0,1)}_q(G/L_S) \to \Omega^{(0,1)}_q(G/L_S) \otimes_{\mathcal{O}_q(G/L_S)} \mathcal{E}_{-k},
\]
associated to $\adel_{\mathcal{E}_{-k}}$, satisfies
\begin{gather*}
\sigma_{-k}\big(e \otimes \adel b\big) = \sum_{i=1}^{N} \adel(ebf_i) \otimes v_i - \sum_{i=1}^{N} \adel(ef_i) \otimes v_ib.
\end{gather*}
Moreover, the inverse map $\sigma^{-1}_{-k}$ satisfies
\begin{gather*}
\sigma^{-1}_{-k}\big(\adel b \otimes e\big) = \sum_{i=1}^N v'_i \otimes \adel(f'_ibe) - \sum_{i=1}^N bv'_i \otimes \adel(f'_ie).
\end{gather*}
\end{enumerate}
\end{prop}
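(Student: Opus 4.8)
The plan is to verify the stated formula for $\sigma_k$ directly from the characterisation of a bimodule connection in equation~\eqref{eqn:BiC}, namely $\sigma_k(e \otimes \adel b) = \adel_{\mathcal{E}_k}(eb) - \adel_{\mathcal{E}_k}(e)b$, using the explicit description of $\adel_{\mathcal{E}_k}$ coming from the zero principal connection on $\mathcal{O}_q(G/L_S^{\mathrm{s}})$. Concretely, under the identification $\mathcal{E}_k \simeq P\,\square_H V_k$, an element $e \in \mathcal{E}_k$ can be written as $e = \sum_i ev'_i \otimes f'_i$ via the dual-basis relation $\sum_i v'_i f'_i = 1$ from \eqref{eqn:genDet}, where $ev'_i \in \mathcal{E}_0 = \mathcal{O}_q(G/L_S)$ and $f'_i \in V_k$. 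Applying the associated-connection formula $\adel_{\mathcal{E}_k}\colon \sum_i p_i \otimes v_i \mapsto j^{-1}\big(\sum_i \adel(p_i) \otimes v_i\big)$ (with $\Pi = 0$), I get $\adel_{\mathcal{E}_k}(e) = j^{-1}\big(\sum_i \adel(ev'_i) \otimes f'_i\big)$, and similarly $\adel_{\mathcal{E}_k}(eb) = j^{-1}\big(\sum_i \adel(ebv'_i) \otimes f'_i\big)$ since $ebv'_i \in \mathcal{O}_q(G/L_S)$ as well.

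Then I would subtract: $\adel_{\mathcal{E}_k}(eb) - \adel_{\mathcal{E}_k}(e)b = \sum_i \adel(ebv'_i) \otimes f'_i - \sum_i \adel(ev'_i)b \otimes f'_i$, where I have used that the right $\mathcal{O}_q(G/L_S)$-action on $\Omega^{(0,1)}_q(G/L_S) \otimes_{\mathcal{O}_q(G/L_S)} \mathcal{E}_k$ passes through the tensor factor so that $\big(\adel(ev'_i) \otimes f'_i\big)b = \adel(ev'_i)b \otimes f'_i$. The point now is that $\adel(ev'_i)b = \adel(ev'_i b) - ev'_i\,\adel b$, but in fact I want to keep things in the form the proposition states, so instead I would write $\adel(ev'_i)b$ by moving $b$ inside via a second application of the generalised determinant relation — rather, the cleanest route is to observe that both expressions live in $j(\Omega^{(0,1)}_q(G/L_S) \otimes_{\mathcal{O}_q(G/L_S)} \mathcal{E}_k)$ inside $\Omega^{(0,1)}_q(G/L_S)P\,\square_H V_k$, and there the claimed formula $\sum_i \adel(ebv'_i) \otimes f'_i - \sum_i \adel(ev'_i) \otimes f'_i b$ equals $\sum_i \adel(ebv'_i) \otimes f'_i - \sum_i \adel(ev'_i)b \otimes f'_i$ after balancing $f'_i b$ across the tensor product using the horizontality of $f'_i$ — one needs that $b f'_i$ and $f'_i b$ interact correctly with $\adel$, which is exactly where \eqref{eqn:holof}, $\adel f'_i = 0$, enters. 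I would then check that the two sides agree by a short Leibniz-rule manipulation, using $\adel(ev'_i) \otimes f'_i b = \adel(ev'_i) b \otimes f'_i$ in $\Omega^{(0,1)}_q(G/L_S)\otimes_{\mathcal{O}_q(G/L_S)}\mathcal{E}_k$ together with the fact that $f'_i b \in \mathcal{E}_k$ when $f'_i \in V_k$ has degree $k$ and $b$ has degree $0$.

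For the inverse formula, the strategy is simply to compute $\sigma_k \circ \sigma_k^{-1}$ (or $\sigma_k^{-1}\circ\sigma_k$) applied to a generator $\adel b \otimes e$ and check it is the identity, using the other dual-basis relation $\sum_i f_i v_i = 1$ from \eqref{eqn:genDet} and $\adel f_i = 0$ from \eqref{eqn:holof}; the bookkeeping is parallel to part~1 but with the roles of $\{f_i, v_i\}$ and $\{v'_i, f'_i\}$ interchanged. Part~2, concerning $\sigma_{-k}$, follows by the same argument applied to the line module $\mathcal{E}_{-k}$, for which the associated module is ${}^{\vee}V_k$ and the relevant dual-basis elements swap roles accordingly (the highest weight vector is now $\overline{z}_N^k$ rather than $z_N^k$); no new idea is needed.

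The main obstacle I anticipate is purely notational rather than conceptual: carefully tracking which tensor products are over $\mathcal{O}_q(G/L_S)$, which elements land in $\mathcal{E}_k$ versus $\mathcal{O}_q(G/L_S)$ versus $V_k$, and keeping the cotensor-product embedding $j$ (and its strong-principal-bundle counterpart $j'$) straight when moving scalars across $\otimes_{\mathcal{O}_q(G/L_S)}$. The genuinely load-bearing inputs are the two generalised determinant relations \eqref{eqn:genDet} — which allow any $e \in \mathcal{E}_{\pm k}$ to be resolved into a finite sum of degree-zero elements tensored with fixed vectors — and the holomorphicity \eqref{eqn:holof}, $\adel f_i = \adel f'_i = 0$, which is what makes the various $\adel$'s collapse onto the correct factors; everything else is the Leibniz rule and the definition \eqref{eqn:BiC} of the associated bimodule map.
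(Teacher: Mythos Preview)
Your strategy --- compute $\sigma_k$ from $\sigma_k(e\otimes\adel b)=\adel_{\mathcal{E}_k}(eb)-\adel_{\mathcal{E}_k}(e)b$ using the explicit formula $\adel_{\mathcal{E}_k}(e)=\sum_i\adel(ev'_i)\otimes f'_i$ --- is sound and uses the same inputs as the paper. But you make one genuine slip that sends you in circles: the right $\mathcal{O}_q(G/L_S)$-action on $\Omega^{(0,1)}_q(G/L_S)\otimes_{\mathcal{O}_q(G/L_S)}\mathcal{E}_k$ is $(\omega\otimes f)b=\omega\otimes fb$, \emph{not} $\omega b\otimes f$. With the correct action, $\adel_{\mathcal{E}_k}(e)b=\sum_i\adel(ev'_i)\otimes f'_ib$ and the subtraction yields the claimed formula immediately --- there is nothing further to ``balance across the tensor product'', and \eqref{eqn:holof} is already fully used in deriving the formula for $\adel_{\mathcal{E}_k}$. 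Your attempt to equate $\adel(ev'_i)\otimes f'_ib$ with $\adel(ev'_i)b\otimes f'_i$ is simply false in the noncommutative setting.

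There is also a notational confusion worth fixing: the associated-module presentation here is $\mathcal{E}_k\simeq\mathcal{O}_q(G/L_S^{\mathrm{s}})\,\square_{\mathcal{O}(U_1)}\mathbb{C}_k$ with a one-dimensional $\mathcal{O}(U_1)$-comodule, not $P\,\square_H V_k$ with the $U_q(\mathfrak{g})$-module $V_k$. The $f'_i$ live in $V_k\subseteq\mathcal{E}_k\subseteq P$, and your ``$e=\sum_i ev'_i\otimes f'_i$'' is really the product identity $e=\sum_i(ev'_i)f'_i$ inside $P$, which $j^{-1}$ then separates into $\Omega^{(0,1)}\otimes_B\mathcal{E}_k$.

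The paper's route is slightly more direct: rather than passing through $\adel_{\mathcal{E}_k}$, it uses the formula $\sigma_k(e\otimes\adel b)=j^{-1}(e\,\adel b)$ established in Theorem~\ref{thm:theTheorem} and verifies by a single Leibniz computation (using $\sum_i v'_if'_i=1$ and $\adel f'_i=0$, hence $\sum_i\adel(v'_i)f'_i=0$) that
\[
e\,\adel b=\sum_i\adel(ebv'_i)f'_i-\sum_i\adel(ev'_i)f'_ib
\]
in $\Omega^{(0,1)}_q(G/L_S)\,\mathcal{O}_q(G/L_S^{\mathrm{s}})$. This sidesteps the right-action bookkeeping entirely. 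The inverse and the $\mathcal{E}_{-k}$ case are handled in the paper exactly as you outline, by swapping the roles of $\{f_i,v_i\}$ and $\{v'_i,f'_i\}$.
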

\begin{proof}
1. For any $e \in \mathcal{E}_k$, it follows from \eqref{eqn:genDet} and \eqref{eqn:holof} that
\begin{gather*}
e\adel b =
\sum_{i=1}^{N} \adel(ebv'_i)f'_i - \sum_{i=1}^{N} \adel(ev'_i)f'_ib.
\end{gather*}
Thus we have expressed $e\adel b$ as an element of $\Omega^1_q(G/L_S)\mathcal{O}_q\big(G/L^{\,\mathrm{s}}_S\big)$, which gives us the claimed formula. The formula for the inverse map is established analogously.

2. For any $e \in \mathcal{E}_{-k}$, it follows from \eqref{eqn:genDet} and \eqref{eqn:holof} that
\begin{gather*}
e\adel b
= \sum_{i=1}^{N} \adel(ebf_i)v_i - \sum_{i=1}^{N} \adel(ef_i)v_ib.
\end{gather*}
Thus we have expressed $e\adel b$ as an element of $\Omega^1_q(G/L_S)\mathcal{O}_q(G/L^{\,\mathrm{s}})$, which gives us the claimed formula. The formula for the inverse map is established analogously.
\end{proof}

In what follows we will denote by $\sigma_k$ the bimodule map associated to the connection $\overline{\partial}_{\mathcal{E}_k}$ and, similarly, by $\sigma_{-k}$ the bimodule map associated to the connection $\adel_{\mathcal{E}_{-k}}$.

\begin{eg}
Let us now check our formula for the bimodule map $\adel_{\mathcal{E}_k}$, for $k \in \mathbb{Z}_{>0}$, by looking at the commutative case. We have
\begin{align*}
\sigma_k\big(e \otimes \adel b\big) &= \sum_{i=1}^{N} \adel(ev'_ib) \otimes f'_i - \sum_{i=1}^{N} \adel(ev'_i) \otimes f'_ib \\
&= \sum_{i=1}^{N} \adel(ev'_i)b \otimes f'_i + \sum_{i=1}^{N} ev'_i\adel b \otimes f'_i - \sum_{i=1}^{N} \adel(ev'_i) \otimes f'_ib \\
&= \sum_{i=1}^{N} \adel b \otimes ev'_if'_i \\
&= \adel b \otimes e.
\end{align*}
Thus we see that $\sigma_k$ reduces to the usual flip map, as it should. A similar situation holds for the line bundle~$\mathcal{E}_{-k}$.
\end{eg}

We finish with the interesting parallel observation that the generalised determinant identities can be used to give an explicit description of the connections $\del_{\mathcal{E}_k}$ and $\adel_{\mathcal{E}_k}$. This generalises the special case of the line module~$\mathcal{E}_1$ over quantum projective space $\mathcal{O}_q\big(\mathbb{CP}^{n}\big)$ given in \cite[Example~8.5]{MMF1}.

\begin{prop}Let $\mathcal{O}_q(G/L_S)$ be an irreducible quantum flag manifold, and $k \in \mathbb{Z}_{>0}$, then for any $e \in \mathcal{E}_k$, we have
\begin{gather*}
\del_{\mathcal{E}_k}(e) = \sum_{i=1}^N \del(ev'_i) \otimes f_i', \qquad \adel_{\mathcal{E}_{k}}(e) = \sum_{i=1}^N \adel(ev'_i) \otimes f'_i.
\end{gather*}
 For any $e \in \mathcal{E}_{-k}$, we have
\begin{gather}\label{eqn:exadelbar}
\del_{\mathcal{E}_{-k}}(e) = \sum_{i=1}^N \del(ef_i) \otimes v_i, \qquad \adel_{\mathcal{E}_{-k}}(e) = \sum_{i=1}^N \adel(ef_i) \otimes v_i.
\end{gather}
\end{prop}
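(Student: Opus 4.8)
The goal is to verify the explicit formulas for $\del_{\mathcal{E}_k}$ and $\adel_{\mathcal{E}_k}$ on each relative line module. The plan is to exploit the characterisation of the associated connection coming from the quantum principal bundle presentation of the Heckenberger--Kolb calculus, together with the generalised determinant identities \eqref{eqn:genDet} and the holomorphicity relations \eqref{eqn:holof}. Recall from Section~\ref{subsection:ConnFromPrinCs} that, with the zero map as strong principal connection, the associated connection on $\mathcal{E}_k \subseteq \mathcal{O}_q(G/L_S^{\mathrm{s}})$ sends $e$ to $\mathrm{d} e$, suitably reinterpreted inside $\Omega^1_q(G/L_S) \otimes_{\mathcal{O}_q(G/L_S)} \mathcal{E}_k$ via the isomorphism $j$ coming from multiplication in $\Omega^1_q(G/L_S^{\mathrm{s}})$. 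Splitting into holomorphic and anti-holomorphic parts, $\adel_{\mathcal{E}_k}(e)$ is just $\adel e$ rewritten as an element of $\Omega^{(0,1)}_q(G/L_S)\, \mathcal{E}_k$.

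The main computation is therefore to rewrite $\adel e$, for $e \in \mathcal{E}_k$, in a form that manifestly lies in $\Omega^{(0,1)}_q(G/L_S)\,\mathcal{O}_q(G/L_S)$. First I would use the identity $\sum_{i=1}^N v'_i f'_i = 1$ from \eqref{eqn:genDet}, valid in $\mathcal{O}_q(G/L_S^{\mathrm{s}})$, where $v'_i$ has degree $-k$ and $f'_i$ has degree $+k$ (so that $ev'_i \in \mathcal{O}_q(G/L_S)$ while $f'_i \in \mathcal{E}_k$). Then $\adel e = \adel(e \sum_i v'_i f'_i) = \sum_i \adel(ev'_i f'_i)$, and applying the Leibniz rule gives $\sum_i \adel(ev'_i) f'_i + \sum_i (ev'_i)\adel(f'_i)$. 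The holomorphicity relation \eqref{eqn:holof}, $\adel f'_i = 0$, kills the second sum, leaving $\adel e = \sum_i \adel(ev'_i) f'_i$; under $j^{-1}$ this is exactly $\sum_i \adel(ev'_i) \otimes f'_i$, as claimed. The holomorphic formula for $\del_{\mathcal{E}_k}$ follows identically from $\del f'_i = 0$. For the modules $\mathcal{E}_{-k}$ one runs the same argument with the roles of the two generalised determinant identities swapped: now $\sum_i f_i v_i = 1$ with $f_i \in V_k$ of degree $+k$ and $v_i \in {}^\vee V_k$ of degree $-k$, so for $e \in \mathcal{E}_{-k}$ one has $ef_i \in \mathcal{O}_q(G/L_S)$ and $v_i \in \mathcal{E}_{-k}$, giving $\adel e = \sum_i \adel(ef_i) v_i$ after using $\adel f_i = 0$; this yields \eqref{eqn:exadelbar}, and likewise for $\del$.

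The only subtlety — and the point I would be most careful about — is the bookkeeping of degrees and the precise identification of the embedding $j$ and its inverse: one must check that $\sum_i \adel(ev'_i) \otimes f'_i$ genuinely lies in the image of $j\colon \Omega^{(0,1)}_q(G/L_S) \otimes_{\mathcal{O}_q(G/L_S)} \mathcal{E}_k \hookrightarrow \Omega^{(0,1)}_q(G/L_S^{\mathrm{s}})$, i.e. that $\adel(ev'_i)$ is a form over the base $\mathcal{O}_q(G/L_S)$ (which it is, being $\adel$ of a degree-zero element) and that $f'_i$ has the right degree $k$. Granting the facts already collected in the excerpt — the quantum principal bundle structure, the strongness just proved, the description of the associated connection, and the relations \eqref{eqn:genDet}, \eqref{eqn:holof} — this is a short and essentially formal verification; indeed it is the same manipulation already used in the proof of the previous proposition to obtain the formula for $\sigma_k$, now without the extra term involving $b$. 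The consistency check in the commutative limit (where $\adel(ev'_i) = \adel(e)v'_i + e\,\adel(v'_i)$ and one collapses via $\sum_i v'_i f'_i = 1$) can be invoked as a sanity test, exactly as in the example following the previous proposition.
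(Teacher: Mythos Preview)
Your overall strategy coincides with the paper's: both express the associated connection as $e \mapsto j^{-1}(\del e)$ (respectively $j^{-1}(\adel e)$), insert the generalised determinant identity $\sum_i v'_i f'_i = 1$, and use the vanishing of the relevant differential on one tensor factor to collapse the expression. Your treatment of the anti-holomorphic cases is correct.

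There is, however, a concrete error in the holomorphic case. You claim that the formula for $\del_{\mathcal{E}_k}$ ``follows identically from $\del f'_i = 0$'', but this is false: the elements $f'_i$ lie in $V_k \subseteq S_q[G/L_S]$, and it is $\adel$, not $\del$, that annihilates $S_q[G/L_S]$ --- that is exactly the content of \eqref{eqn:holof}. The correct vanishing statement, dual to \eqref{eqn:holof} and coming from the construction of the $(1,0)$-calculus $\Omega^{(1,0)}_q\big(G/L^{\mathrm{s}}_S\big)$, is that $\del v_i = \del v'_i = 0$, since $v_i, v'_i \in {}^{\vee}V_k \subseteq S_q\big[G/L^{\mathrm{op}}_S\big]$. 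With this in hand the paper proceeds slightly differently from you: it writes $\del(e) = \sum_i \del(e)\, v'_i f'_i$ and then uses $\del v'_i = 0$ in the form $\del(e)\,v'_i = \del(ev'_i)$ to obtain $\sum_i \del(ev'_i)\,f'_i$ directly. Your Leibniz expansion of $\del(ev'_i f'_i)$ can also be salvaged, but only by observing that the unwanted term $\sum_i e v'_i\,\del(f'_i) = -e\sum_i \del(v'_i)\, f'_i$ vanishes \emph{as a sum} via $\del v'_i = 0$, not termwise. The same correction applies to $\del_{\mathcal{E}_{-k}}$, where one must invoke $\del v_i = 0$ rather than any putative $\del f_i = 0$.
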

\begin{proof}We give the proof for the first identity, the proofs of the other three being completely analogous. This follows from the calculation
\begin{gather*}
\del_{\mathcal{E}_k}(e) = j^{-1}(\del(e)) = j^{-1}\left(\sum_{i=1}^N \del(e)v'_if'_i\right) = j^{-1}\left(\sum_{i=1}^N \del(ev'_i)f'_i\right) = \sum_{i=1}^N \del(ev'_i) \otimes f'_i,
\end{gather*}
where as usual, we have used the fact that $\del v_i = 0$, for all $i=1, \dots, N$.
\end{proof}

\subsection{Some examples in terms of the FRT generators} \label{subsection:FRT}

The non-exceptional Drinfeld--Jimbo quantum coordinate algebras $\mathcal{O}_q(G)$ admit a well-known alternative construction, called the FRT construction \cite{FRT89}, which essentially gives a generator and relation presentation in terms of the coordinate functions of the first fundamental representation~$V_{\varpi_1}$ of~$U_q(\mathfrak{g})$. It proves useful when explicit commutation relations are required, as in Section~\ref{subsection:Takeuchi} below, and is commonly used in the $C^*$-algebra literature.

In this subsection we give an FRT presentation of the bimodule maps for the special cases of the standard Podle\'s sphere, the more general quantum Grassmannians, and the quantum quadrics, special cases of the irreducible quantum flag manifolds, as presented in Appendix~\ref{appendixA} below.

\subsubsection{The standard Podle\'s sphere}

The standard Podle\'s sphere $\mathcal{O}_q\big(S^2\big)$ is the single quantum flag manifold of $\mathcal{O}_q(\mathrm{SU}_2)$, moreover the quantum Poisson homogeneous space of $\mathcal{O}_q\big(S^2\big)$ coincides with $\mathcal{O}_q(\mathrm{SU}_2)$. Denoting by $u^i_j$, for $i,j = 1,2$, the standard generators of $\mathcal{O}_q(\mathrm{SU}_2)$ (see \cite[Section~4.1]{KSLeabh}), the quantum determinant relations
\begin{gather*}
u_{11}u_{22} - qu_{21}u_{12} = 1, \qquad u_{22}u_{11} - q^{-1}u_{12}u_{21} = 1
\end{gather*}
give a pair of relations of the form \eqref{eqn:genDet}. Thus we see that for any $e \in \mathcal{E}_1$, the element $\sigma_1\big(e \otimes \adel b\big)$ is equal to
\begin{gather*}
\adel(ebu_{22}) \otimes u_{11}- \adel(eu_{22}) \otimes u_{11} b - q^{-1} \big(\adel(ebu_{12}) \otimes u_{21}-\adel(eu_{12}) \otimes u_{21}b\big).
\end{gather*}
Moreover, for any $e \in \mathcal{E}_{-1}$, the element $\sigma_{-1}(e \otimes \adel b)$ is equal to
\begin{gather*}
\adel(ebu_{11}) \otimes u_{22}-\adel(eu_{11}) \otimes u_{22}b - q\big( \adel(ebu_{21}) \otimes u_{12}- \adel(eu_{21}) \otimes u_{12}b\big).
\end{gather*}
The line bundles $\mathcal{E}_{-2}$ and $\mathcal{E}_{2}$ are isomorphic as relative line modules to the fodc $\Omega^{(1,0)}_q\big(S^2\big)$ and $\Omega^{(0,1)}_q\big(S^2\big)$ respectively. In this case the square of the first determinant relation
\begin{gather*}
(u_{22})^2(u_{11})^2-\big(q+q^{-1}\big)u_{22}u_{12}u_{21}u_{11}+q^{-2}(u_{12})^2(u_{21})^2=1,
\end{gather*}
gives a relation of the form \eqref{eqn:genDet}. This means that for $e \in \mathcal{E}_2$, we have
\begin{align*}
\sigma_2\big(e \otimes \adel b\big) ={} & \adel\big(eb(u_{22})^2\big) \otimes (u_{11})^2 -\adel\big(e(u_{22})^2\big) \otimes (u_{11})^2b
\\&{}- \big(q+q^{-1}\big)\big(\adel(ebu_{22}u_{12}\big) \otimes u_{21}u_{11} -\adel(ebu_{22}u_{12}) \otimes u_{21}u_{11}b)
\\&{} - q^{-2} \big(\adel\big(eb(u_{12})^2\big)\otimes (u_{21})^2-\adel\big(e(u_{12})^2\big) \otimes (u_{21})^2b\big).
\end{align*}
Moreover, we have an analogous expression for the line bundle $\mathcal{E}_{-2}$.
This reproduces the explicit presentation of the bimodule map given by Beggs and Majid in \cite[example 5.51]{BeggsMajid:Leabh}.

\subsubsection{The quantum grassmannians}

Let us now consider the $A_{n-1}$-series irreducible quantum flag manifolds, that is, the quantum Grassmannians, where the crossed simple root $\alpha_m$ corresponds to the quantum Grassmannian $\mathcal{O}_q(\mathrm{Gr}_{n,m})$. Quantum projective space $\mathcal{O}_q\big(\mathbb{CP}^{n-1}\big)$ corresponds to the special cases of $\alpha_m = \alpha_1$, and $\alpha_m = \alpha_{n-1}$. Moreover, these reduce to the standard Podle\'s sphere for the case of $\mathcal{O}_q(\mathrm{SU}_2)$.

Denote by $u_{ij}$, for $i,j = 1, \dots, n$, the FRT generators of $\mathcal{O}_q(\mathrm{SU}_n)$ \cite{FRT89}, (see also \cite[Section~9.2]{KSLeabh}). Let $I: = \{i_1, \dots, i_m\}$ and $J:=\{j_1, \dots, j_p\}$ be a pair of subsets of $\{1,\dots, n\}$. The associated {\em quantum minor} $[I|J]$ is the element of $\mathcal{O}_q(\mathrm{SU}_n)$ given by
\begin{gather*}
[I|J] := \sum_{\sigma \in S_p} (-q)^{\ell(\sigma)}u_{\sigma(i_1)j_1} \cdots u_{\sigma(i_p)j_p} = \sum_{\sigma \in S_p} (-q)^{\ell(\sigma)} u_{i_1\sigma(j_1)} \cdots u_{i_p\sigma(j_p)}.
\end{gather*}
The quantum Poisson homogeneous space $\mathcal{O}_q(S^{n,m})$ of the quantum Grassmannian is generated by the quantum minors
\begin{gather*}
z_{A} := \big[A|M^{\perp}\big], \qquad \overline{z}_B := [B|M], \qquad \textrm{for } |A| = n-m, \ |B| = m,
\end{gather*}
where $M := \{1, \dots, m\}$ and $M^\perp:= \{m+1, \dots, n\}$.

For $I$, $J$ ordered subsets of $\{1,\dots ,n\}$, we denote by $I^{\perp}$, and $J^{\perp}$, the ordered complements to $I$, and $J$ respectively, in $\{1,\dots, n\}$. Moreover, we write
\begin{gather*}
\ell(I,J):= |\{(i,j)\in I\times J \,| \, i>j\}|, \qquad \ell_{\perp}(A,B) := \ell\big(A,A^{\perp}\big) - \ell\big(B,B^{\perp}\big).
\end{gather*}
This gives the well-known antipode expression
\begin{gather*}
S([I|J]) = (-q)^{\ell_{\perp}(I,J)}\big[{J^\perp}|{I^\perp}\big],
\end{gather*}
From this we can produce the standard identity
\begin{align*}
1 = \e(z_M) &= m \circ (S \otimes \mathrm{id}) \circ \Delta(z_M)\\
& = \sum_{|A| = m} S([M,A])[A,M]\\
& = \sum_{|A| = m} q^{\ell_{\perp}(M,A)} \big[A^{\perp},M^{\perp}\big][A,M]\\
& = \sum_{|A| = m} q^{\ell_{\perp}(M,A)} \overline{z}_{A^{\perp}}z_A,
\end{align*}
which is of the form \eqref{eqn:genDet}. Now, for $e \in \mathcal{E}_1$, we see that
\begin{gather*}
\sigma_1\big(e \otimes \adel b\big) = \sum_{|A|=n-r} q^{\ell_{\perp}(M,A)} \adel(eb\overline{z}_{A^{\perp}}) \otimes z_A- \sum_{|A|=n-r} q^{\ell_{\perp}(M,A)} \adel(e\overline{z}_{A^{\perp}}) \otimes z_Ab.
\end{gather*}
A similar argument shows that, for $e \in \mathcal{E}_{-1}$,
\begin{gather*}
\sigma_{-1}\big(e \otimes \adel b\big) = \sum_{|B|=m} q^{\ell_{\perp}(M,B)} \adel(ebz_{B^{\perp}}) \otimes \overline{z}_B - \sum_{|B|=m} q^{\ell_{\perp}(M,B)} \adel(ez_{B^{\perp}}) \otimes \overline{z}_Bb.
\end{gather*}
Following the same approach we can also produce explicit formula for the connections $\adel_{\mathcal{E}_k}$. For for $e \in \mathcal{E}_1$, it holds that
\begin{gather*}
\adel_{\mathcal{E}_{-1}}(e) = \sum_{|A|=n-r} q^{\ell_{\perp}(M,A)} \adel(e\overline{z}_{A^{\perp}}) \otimes z_A.
\end{gather*}
For $e \in \mathcal{E}_{-1}$, it holds that
\begin{gather*}
\adel_{\mathcal{E}_{-1}}(e) = \sum_{|B|=m} q^{\ell_{\perp}(M,B)} \adel(ez_{B^{\perp}}) \otimes \overline{z}_B.
\end{gather*}
Analogous, if more technical, quantum minor type formulae for $\sigma_k$ and $\del$ can be produced for the general line bundles $\mathcal{E}_k$, for $k \in \mathbb{Z}$. For the special case of the standard Podle\'s sphere these formulae reduce to the presentations of the anti-holomorphic connection given by Beggs and Majid in \cite[Example~5.23]{BeggsMajid:Leabh}.

\subsubsection{The quantum quadrics}

Let us now consider the $B_n$-series irreducible quantum flag manifolds, which is to say, the \emph{odd quantum quadrics} $\mathcal{O}_q(\mathbf{Q}_{2n+1})$. In this case the Poisson quantum homogeneous space $q$-deforms the coordinate functions of the Stieffel manifold $V_2\mathbb{R}^{2n+1} \simeq \mathrm{SO}_{2n+1}/\mathrm{SO}_{2n-1}$. Thus it is called \emph{quantum real $(2n+1,2)$-Stieffel manifold} and denoted by $\mathcal{O}_q\big(V_2\mathbb{R}^{2n+1}\big)$.

For the $B$-series, the FRT algebra is a $q$-deformation of the coordinate algebra of $\mathrm{SO}_{2n+1}$. In other words, it is the proper subalgebra of $\mathcal{O}_q(\mathrm{Spin}_{2n+1})$ generated by the coordinate functions of the first fundamental representation $V_{\varpi_1}$ of $U_q(\mathfrak{so}_{2n+1})$. We write $u_{ij}$, where $i,j = 1, \dots, 2n+1$, for the FRT generators of $\mathcal{O}_q(\mathrm{SO}_{2n+1})$. Denoting
\begin{gather*}
i' = 2n+2-i, \qquad \rho_i := n + \frac{1}{2} -i, \quad \textrm{for } i < i', \qquad \rho_{i} := -\rho_{i'}, \quad \textrm{for } i' < i,
\end{gather*}
an explicit formula for the antipode of $\mathcal{O}_q(\mathrm{SO}_{2n+1})$ is given by
\begin{gather*}
S(u_{ij}) = q^{\rho_i - \rho_j} u_{j'i'},
\end{gather*}

The quantum Stieffel manifold $\mathcal{O}_q\big(V_2\mathbb{R}^{2n+1}\big)$ is a subalgebra of $\mathcal{O}_q(\mathrm{SO}_{2n+1})$, with generators $z_i = u_{i,2n+1}$, and $\overline{z}_i = u_{i1}$, for $i = 1, \dots, 2n+1$. The identity
\begin{gather*}
1 = \e(z_{2n+1}) = m \circ (S \otimes \mathrm{id}) \circ \Delta(z_{2n+1}) = \sum_{a=1}^{2n+1} q^{\rho_i - n - \frac{1}{2}}\overline{z}_{a'}z_a,
\end{gather*}
follows from the antipode formula, and implies that, for $e \in \mathcal{E}_1$,
\begin{gather*}
\sigma_1\big(e \otimes \adel b\big) = \sum_{i=1}^{2n+1} q^{-(\rho_i + n + \frac{1}{2})} \adel(eb\overline{z}_{i'}) \otimes z_i - \sum_{i=1}^{2n+1} q^{-(\rho_i + n + \frac{1}{2})} \adel(e\overline{z}_{i}) \otimes z_ib.
\end{gather*}
An analogous argument shows that, for $e \in \mathcal{E}_{-1}$, we have
\begin{gather*}
\sigma_{-1}\big(e \otimes \adel b\big) = \sum_{i=1}^{2n+1} q^{\rho_i - n - \frac{1}{2}}\adel(ebz_{i'}) \otimes \overline{z}_i - \sum_{i=1}^{2n+1} q^{\rho_i - n - \frac{1}{2}} \adel(ez_{i}) \otimes \overline{z}_ib.
\end{gather*}
Following the same approach we can also produce explicit formula for the connections $\adel_{\mathcal{E}_k}$. For for $e \in \mathcal{E}_1$, it holds that
\begin{gather*}
\adel_{\mathcal{E}_1}(e) = \sum_{i=1}^{2n+1} q^{-(\rho_i + n + \frac{1}{2})} \adel(e\overline{z}_{i'}) \otimes z_i
\end{gather*}
For $e \in \mathcal{E}_{-1}$, it holds that
\begin{gather*}
\adel_{\mathcal{E}_{-1}}(e) = \sum_{i=1}^{2n+1} q^{\rho_i - n - \frac{1}{2}}\adel(ez_{i'}) \otimes \overline{z}_i.
\end{gather*}
The higher order bundles $\mathcal{E}_k$ admit analogous descriptions, as do the \emph{even quantum quadrics} $\mathcal{O}_q(\mathbf{Q}_{2n})$, which is to say the $C_{n}$-irreducible quantum flag manifolds.

\subsection{A description using Takeuchi's equivalence} \label{subsection:Takeuchi}

Consider, for any quantum flag manifold $\mathcal{O}_q(G/L_S)$, the category of \emph{relative Hopf modules}
\[
{}^{~~~\,\mathcal{O}_q(G)}_{\mathcal{O}_q(G/L_S)}\mathrm{Mod}_{\mathcal{O}_q(G/L_S)},
\]
whose objects are left \mbox{$\mathcal{O}_q(G)$-comodules} $\Delta_L\colon \mathcal{F} \to \mathcal{O}_q(G) \otimes \mathcal{F}$, endowed with the structure of an $\mathcal{O}_q(G/L_S)$-bimodule, such that
\begin{gather*}
\Delta_L(bfb') = \Delta_L(b)\Delta_L(f)\Delta_L(b'), \qquad \textrm{for all } f \in \mathcal{F},\ b,b' \in \mathcal{O}_q(G/L_S),
\end{gather*}
and whose morphisms are both left $\mathcal{O}_q(G)$-comodule, and $\mathcal{O}_q(G/L_S)$-bimodule, maps. Moreover, we consider the full subcategory
\[
{}^{~~~\,\mathcal{O}_q(G)}_{\mathcal{O}_q(G/L_S)}\mathrm{Mod}_{0},
\]
whose objects $\mathcal{F}$ are those satisfying $\mathcal{F} \mathcal{O}_q(G/L_S)^+ = \mathcal{O}_q(G/L_S)^+\mathcal{F}$. This is a monoidal category with respect to the bimodule tensor product $\otimes_B$. Moreover, the calculi $\Omega^{(0,1)}_q(G/L_S)$ and $\Omega^{(1,0)}_q(G/L_S)$ live in this subcategory \cite[Proposition 3.3]{HKdR}, as do the relative line modules $\mathcal{E}_k$, as explained for example in \cite[Appendix A.3]{GAPP}.

Consider next the category
\[
{}^{\mathcal{O}_q(L_S)}\mathrm{Mod}
\]
whose objects $\Delta_L\colon V \to \mathcal{O}_q(L_S) \otimes V$ are left \mbox{$\mathcal{O}_q(L_S)$-comodules}, and whose morphisms are left $\mathcal{O}_q(L_S)$-comodule maps. This has a monoidal structure given by the usual tensor product of comodules.

We define a functor
\[
\Phi\colon \ {}^{~~~\,\mathcal{O}_q(G)}_{\mathcal{O}_q(G/L_S)}\mathrm{Mod}_{\mathcal{O}_q(G/L_S)} \to {}^{\mathcal{O}_q(L_S)}\mathrm{Mod}_{\mathcal{O}_q(G/L_S)},
\]
by setting $\Phi(\mathcal{F}) := \mathcal{F}/\mathcal{O}_q(G/L_S)^+\mathcal{F}$,
with the left $\mathcal{O}_q(L_S)$-comodule structure of $\Phi(\mathcal{\mathcal{F}})$ given by
\begin{gather*}
\Delta_L[f] := \pi_S(f_{(-1)})\otimes [f_{(0)}],
\end{gather*}
where the square brackets denote the coset of an element in $\Phi(\mathcal{\mathcal{F}})$, and morphisms descend to the quotient. A functor in the other direction
\[
\Psi\colon \ {}^{\mathcal{O}_q(L_S)}\mathrm{Mod}_{\mathcal{O}_q(G/L_S)} \to {}^{~~~\,\mathcal{O}_q(G)}_{\mathcal{O}_q(G/L_S)}\mathrm{Mod}_{\mathcal{O}_q(G/L_S)},
\]
is given by the cotensor product
$\Psi(V) := \mathcal{O}_q(G) \square_{\mathcal{O}_q(L_S)} V$
which acts on a morphism $\gamma$ as $\Phi(\gamma) = \mathrm{id} \otimes \gamma$.

Since $\mathcal{O}_q(G)$ is faithfully flat as a right $\mathcal{O}_q(G/L_S)$-module, it follows from Takeuchi's equivalence for quantum homogeneous spaces~\cite{Tak}, that $\Phi$ induces an equivalence of categories. Moreover, it is a monoidal equivalence \cite[Section~4]{MMF2}. We note that the relative line modules~$\mathcal{E}_k$ are invertible objects in the category ${}^{~~~\,\mathcal{O}_q(G)}_{\mathcal{O}_q(G/L_S)}\mathrm{Mod}_0$. Indeed, as shown in \cite[Proposition 5.7]{GAPP} for details), they are the only such objects.

Since any $\sigma_k$ is clearly a morphism, we can completely describe it by identifying its image under $\Phi$. The following proposition tells us that $\Phi(\sigma_k)$ takes quite a simple form.

For sake of clarity in the proof, we find it useful to establish a simple general lemma.

\begin{lem}
Let $\nabla\colon \mathcal{F} \to \Omega^1(B) \otimes_B\mathcal{F} $ and $\nabla'\colon \mathcal{F}' \to\Omega^1(B) \otimes_B \mathcal{F}' $ be two bimodule connections, and $\alpha\colon \mathcal{F} \to \mathcal{F}'$ an isomorphism between them. We note that the following diagram commutes
\begin{gather} \label{eqn:commdiag}
\begin{split}
& \xymatrix{
\mathcal{F} \otimes_B \Omega^1(B) \ar[d]_{\alpha \otimes \mathrm{id}} \ar[rrr]^{\sigma} & & & \Omega^1(B) \otimes_B \mathcal{F} \ar[d]^{\mathrm{id} \otimes \alpha} \\
\mathcal{F}' \otimes_B \Omega^1(B) \,\, \ar[rrr]_{\sigma'} & & & \Omega^1(B) \otimes_B \mathcal{F}',
}\end{split}
\end{gather}
where $\sigma$ and $\sigma'$ are the bimodule maps associated to $\nabla$ and $\nabla'$ respectively.
\end{lem}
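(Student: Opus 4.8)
The plan is to check commutativity of \eqref{eqn:commdiag} on a generating set and then conclude by additivity. Recall that, by the definition of a first-order differential calculus, $\Omega^1(B)$ is generated as a left $B$-module by the exact forms $\mathrm{d}b$, $b \in B$. Hence, using the defining relation of the balanced tensor product to absorb left $B$-coefficients into the first leg, every element of $\mathcal{F} \otimes_B \Omega^1(B)$ is a finite sum of elementary tensors $f \otimes \mathrm{d}b$ with $f \in \mathcal{F}$ and $b \in B$. Since both $\sigma' \circ (\alpha \otimes \mathrm{id})$ and $(\mathrm{id} \otimes \alpha) \circ \sigma$ are additive, it suffices to verify that they agree on such elements.

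First I would evaluate the top-then-right composite. By \eqref{eqn:BiC} applied to $(\nabla,\sigma)$ we have $\sigma(f \otimes \mathrm{d}b) = \nabla(fb) - \nabla(f)b$, so, using that $\mathrm{id} \otimes \alpha$ is right $B$-linear, $(\mathrm{id} \otimes \alpha)\sigma(f \otimes \mathrm{d}b) = (\mathrm{id} \otimes \alpha)\nabla(fb) - \big((\mathrm{id} \otimes \alpha)\nabla(f)\big)b$. Now the hypothesis that $\alpha$ is an isomorphism of connections gives $(\mathrm{id} \otimes \alpha)\circ \nabla = \nabla' \circ \alpha$, while the hypothesis that $\alpha$ is a bimodule map gives $\alpha(fb) = \alpha(f)b$; substituting, the expression becomes $\nabla'(\alpha(f)b) - \nabla'(\alpha(f))b$, which by \eqref{eqn:BiC} applied to $(\nabla',\sigma')$ equals $\sigma'(\alpha(f) \otimes \mathrm{d}b)$. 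On the other hand, the left-then-bottom composite sends $f \otimes \mathrm{d}b$ to $\sigma'\big((\alpha \otimes \mathrm{id})(f \otimes \mathrm{d}b)\big) = \sigma'(\alpha(f) \otimes \mathrm{d}b)$, the very same element. This proves the claim.

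There is no substantive obstacle here; the lemma is a formal consequence of the definitions. The only points that require a little care are the reduction to generators of the form $f \otimes \mathrm{d}b$ (which relies on the generation axiom for a fodc and on the relations defining $\otimes_B$), and keeping track of which of the maps in sight are left versus right $B$-linear, so that coefficients may be moved across the tensor products correctly.
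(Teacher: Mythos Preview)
Your proof is correct and follows essentially the same route as the paper: both compute $\sigma'(\alpha(f)\otimes\mathrm{d}b)$ and $(\mathrm{id}\otimes\alpha)\sigma(f\otimes\mathrm{d}b)$ via the defining identity \eqref{eqn:BiC} and the intertwining relation $(\mathrm{id}\otimes\alpha)\circ\nabla=\nabla'\circ\alpha$. You are simply more explicit about the reduction to generators $f\otimes\mathrm{d}b$, which the paper leaves implicit.
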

\begin{proof}
The calculation
\begin{gather*}
 \sigma'(\alpha (f) \otimes \mathrm{d}b) = \nabla ' (\alpha (fb))-\nabla'(\alpha(f))b
 = (\mathrm{id} \otimes \alpha)(\nabla( fb)-\nabla(f)b)
 =(\mathrm{id} \otimes \alpha) \sigma(f \otimes \mathrm{d} b)
\end{gather*}
directly implies the result.
\end{proof}

\begin{prop} \label{prop:TAKtheta}
For any irreducible quantum flag manifold $\mathcal{O}_q(G/L_S)$, the map
\[
\Phi(\sigma_k)\colon \ \Phi(\mathcal{E}_k) \otimes \Phi\big(\Omega^{(0,1)}_q(G/L_S)\big) \to \Phi\big(\Omega^{(0,1)}_q(G/L_S)\big) \otimes \Phi(\mathcal{E}_k),
\]
for any line module $\mathcal{E}_k$, with $k \in \mathbb{Z}$, satisfies
\[
\Phi(\sigma_k)([e] \otimes [\omega]) = \theta^k [\omega] \otimes [e],
\]
for a nonzero scalar $\theta \in \mathbb{C}$.
\end{prop}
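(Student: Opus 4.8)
The plan is to exploit the fact that Takeuchi's equivalence $\Phi$ is monoidal and sends each $\mathcal{E}_k$ to an invertible object of ${}^{\mathcal{O}_q(L_S)}\mathrm{Mod}_{\mathcal{O}_q(G/L_S)}$, so that $\Phi(\mathcal{E}_k)$ is a one-dimensional $\mathcal{O}_q(L_S)$-comodule (equivalently, a character of $\mathcal{O}_q(L_S)$) with underlying module a rank-one free module over the base. Concretely, since $\mathcal{E}_k \cong \mathcal{E}_1^{\otimes k}$ as objects of the relative Hopf module category (and similarly $\mathcal{E}_{-k} \cong \mathcal{E}_{-1}^{\otimes k}$, with $\mathcal{E}_{-1} \cong \mathcal{E}_1^{-1}$), monoidality of $\Phi$ gives $\Phi(\mathcal{E}_k) \cong \Phi(\mathcal{E}_1)^{\otimes k}$. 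Likewise, by the formula \eqref{eqn:bimcontenpr} and the remark that the tensor product of bimodule connections has bimodule map $(\sigma_{\mathcal{F}} \otimes \mathrm{id}) \circ (\mathrm{id} \otimes \sigma_{\mathcal{E}})$, together with Lemma~\ref{prop:TAKtheta}'s commuting square \eqref{eqn:commdiag} applied to the multiplication isomorphism $\mathcal{E}_1^{\otimes k} \cong \mathcal{E}_k$, the map $\Phi(\sigma_k)$ is determined by $\Phi(\sigma_1)$ via $\Phi(\sigma_k) = (\text{iterated } \Phi(\sigma_1))$. So it suffices to treat $k = 1$ and $k = -1$ and set $\theta := \theta_1$.

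For $k = 1$: the object $\Phi(\mathcal{E}_1)$ is free of rank one over $\Phi(\mathcal{O}_q(G/L_S)) = \mathbb{C}$; pick a generator $[e_0]$. Similarly $\Phi(\Omega^{(0,1)}_q(G/L_S))$ is some object of the target category, and $\Phi(\sigma_1)$ is a morphism $\Phi(\mathcal{E}_1) \otimes \Phi(\Omega^{(0,1)}) \to \Phi(\Omega^{(0,1)}) \otimes \Phi(\mathcal{E}_1)$. The key point is that because $\Phi(\mathcal{E}_1)$ is invertible, both tensor factors are naturally isomorphic — as left $\mathcal{O}_q(L_S)$-comodules and right $\mathcal{O}_q(G/L_S)$-modules — to $\Phi(\Omega^{(0,1)})$ itself after cancelling the invertible object; here one uses that $\mathcal{E}_1$ lies in ${}^{\mathcal{O}_q(G)}_{\mathcal{O}_q(G/L_S)}\mathrm{Mod}_0$, so its image commutes past everything in the monoidal sense. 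Thus $\Phi(\sigma_1)$ is an endomorphism of (an object isomorphic to) $\Phi(\Omega^{(0,1)})$ in the target category. I would then invoke the fact — which follows from Proposition~\ref{prop:TAKtheta}'s setup, namely that $\Phi(\Omega^{(0,1)}_q(G/L_S))$ is one-dimensional over $\mathbb{C}$ (it is the weight space corresponding to $\adel$, a one-parameter piece of the Heckenberger--Kolb data, and $\mathcal{O}_q(L_S)$-isotypical) — that such an endomorphism is necessarily scalar. Call this scalar $\theta$; it is nonzero because $\sigma_1$ is an isomorphism by the Theorem in Section~\ref{section3.4} and $\Phi$, being an equivalence, preserves isomorphisms. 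Explicitly one can read off $\theta$ by applying $\Phi$ to the generalised-determinant formula for $\sigma_1$: passing to the quotient by $\mathcal{O}_q(G/L_S)^+$ kills the terms $\adel(ev'_i) \otimes f'_i b$ modulo the base ideal and collapses $\sum_i \adel(ebv'_i) \otimes f'_i$ to a scalar multiple of $[\omega] \otimes [e]$ via \eqref{eqn:genDet}, the scalar being the value of a suitable character of $\mathcal{O}_q(L_S)$ on the relevant grouplike/weight element — this both confirms the scalar form and identifies $\theta$.

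The case $k = -1$ is handled identically with $\mathcal{E}_{-1}$ in place of $\mathcal{E}_1$, producing a scalar $\theta_{-1}$; monoidality forces $\theta_{-1} = \theta^{-1}$ since $\Phi(\mathcal{E}_{-1}) = \Phi(\mathcal{E}_1)^{-1}$ and $\sigma_{-1}$ is the bimodule map of the inverse object, so $\Phi(\sigma_{-1})$ must compose with $\Phi(\sigma_1)$ to the identity under the duality. Then for general $k > 0$, iterating \eqref{eqn:bimcontenpr} and using \eqref{eqn:commdiag} gives $\Phi(\sigma_k) = (\Phi(\sigma_1))^{\text{$k$-fold composite along the tensor factors}}$, each factor contributing $\theta$, so $\Phi(\sigma_k)([e]\otimes[\omega]) = \theta^k [\omega]\otimes[e]$; for $k < 0$ one uses the $\mathcal{E}_{-1}$ version and gets $\theta^k$ with the negative exponent. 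The main obstacle I anticipate is the clean justification that $\Phi(\sigma_1)$ is forced to be a scalar: this rests on knowing that $\Phi(\Omega^{(0,1)}_q(G/L_S))$ is a simple (indeed one-dimensional) object, or at least that the internal-hom space of endomorphisms of the relevant object in ${}^{\mathcal{O}_q(L_S)}\mathrm{Mod}_{\mathcal{O}_q(G/L_S)}$ is just $\mathbb{C}$. If $\Phi(\Omega^{(0,1)})$ is not literally one-dimensional one must instead argue that $\Phi(\sigma_1)$ commutes with the $\mathcal{O}_q(L_S)$-coaction and the $\mathcal{O}_q(G/L_S)$-action and that these force scalarity on the rank-one twist — tracking the bookkeeping of which module "absorbs" the invertible $\Phi(\mathcal{E}_1)$ on each side is the delicate step, and is precisely where Lemma~\ref{prop:TAKtheta} and the monoidality of $\Phi$ do the real work.
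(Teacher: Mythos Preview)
Your overall architecture---reduce to $k=\pm1$ via the tensor-product formula for bimodule maps together with the commuting square \eqref{eqn:commdiag}, establish scalarity of $\Phi(\sigma_1)$, then propagate---matches the paper. The gap is at the central step: you assert, or at least hope, that $\Phi\big(\Omega^{(0,1)}_q(G/L_S)\big)$ is one-dimensional. It is not. Its dimension equals the complex dimension of $G/L_S$; only for the Podle\'s sphere is this $1$. So the endomorphism argument you sketch does not go through as stated, and your fallback (``these force scalarity on the rank-one twist'') is left unjustified.

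What the paper actually uses is that $\Phi\big(\Omega^{(0,1)}_q(G/L_S)\big)$ is an \emph{irreducible} $U_q(\mathfrak{l}_S)$-module (a fact inherited from the classical picture and the Heckenberger--Kolb construction). Since $\Phi(\mathcal{E}_k)$ is one-dimensional, the tensor products $\Phi(\mathcal{E}_k)\otimes\Phi\big(\Omega^{(0,1)}\big)$ and $\Phi\big(\Omega^{(0,1)}\big)\otimes\Phi(\mathcal{E}_k)$ are again irreducible. Now $\Phi(\sigma_k)$ is an $\mathcal{O}_q(L_S)$-comodule map between irreducibles, hence sends a highest weight vector to a scalar multiple of a highest weight vector; to see the \emph{same} scalar appears throughout one uses that $U_q(\mathfrak{l}_S^{\,\mathrm{s}})$, being the quantised envelope of a semisimple Lie algebra, has no nontrivial one-dimensional representations, so it acts trivially on $\Phi(\mathcal{E}_k)$ and the action passes straight through to the $\Omega^{(0,1)}$ factor. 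This Schur-type argument is the missing idea. Your treatment of $\theta_{-1}=\theta^{-1}$ via invertibility is fine; the paper instead routes this through $\theta_0=1$ using the trivial bimodule connection of Example~\ref{eg:trivialBimodConn} and the multiplicativity $\theta_{k+l}=\theta_k\theta_l$.
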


\begin{proof}
Since the functor $\Phi$ induces a monoidal equivalence, we have an isomorphism
\begin{gather*}
\Phi\big(\mathcal{E}_k \otimes_{\mathcal{O}_q(G/L_S)} \Omega^{(0,1)}_q(G/L_S)\big) \simeq \Phi(\mathcal{E}_k) \otimes \Phi\big(\Omega^{(0,1)}_q(G/L_S)\big).
\end{gather*}
As in the classical setting, for each irreducible quantum flag manifold, $\Phi\big(\Omega^{(0,1)}_q(G/L_S)\big)$ is an irreducible $U_q(\mathfrak{l}_S)$-module. Moreover, since $\mathcal{E}_k$ is invertible as a $B$-bimodule, the $U_q(\mathfrak{l}_S)$-module $\Phi(\mathcal{E}_k)$ must be one-dimensional. Thus we see that $\Phi(\mathcal{E}_k) \otimes \Phi\big(\Omega^{(0,1)}_q(G/L_S)\big)$ is an irreducible $U_q(\mathfrak{l}_S)$-module. An analogous argument establishes that the tensor product $\Phi\big(\Omega^{(0,1)}_q(G/L_S)\big) \otimes \Phi(\mathcal{E}_k)$ is irreducible as a $U_q(\mathfrak{l}_S)$-module.
If $\big[\,\adel b_{hw}\big]$ is a highest weight vector of $\Phi\big(\Omega^{(0,1)}_q(G/L_S)\big)$, then there exists a non-zero scalar~$\theta_k$, such that
\[
\Phi(\sigma_k)\big(\big[\,\adel b_{hw}\big] \otimes [e]\big) = \theta_k [e] \otimes \big[\,\adel b_{hw}\big].
\]
For any other element $\big[\,\adel b\big]$, there exists an $X\in U_q\big(\mathfrak{l}_S^{\,\mathrm{s}}\big)$ such that
\begin{gather*}
 \big[\,\adel b\big] \otimes [e]=X\big(\big[\,\adel b_{hw}\big] \otimes [e]\big).
\end{gather*}
Hence we have that
\begin{align*}
 \Phi(\sigma_k)\big(\big[\,\adel b\big] \otimes [e]\big) &= \theta_k X\big([e] \otimes \big[\,\adel b_{hw}\big]\big)
= \theta_k (X_{(1)}[e]) \otimes \big(X_{(2)}\big[\,\adel b_{hw}\big]\big)\\
&= \theta_k [e] \otimes \big(X\big[\,\adel b_{hw}\big]\big)
= \theta_k [e] \otimes \big[\,\adel b\big].
\end{align*}
Note that the penultimate equality follows from the fact that, since $\mathfrak{l}_S^{\,\mathrm{s}}$ is a semisimple Lie algebra, it admits no non-trivial one-dimensional representations, and so, the Hopf algebra $U_q\big(\mathfrak{l}_S^{\,\mathrm{s}}\big)$ admits no non-trivial $1$-dimensional representations.

Now since the connection $\adel_{\mathcal{E}_k}$ is isomorphic to the $k$-fold tensor product of the connection~$\adel_{\mathcal{E}_1}$, it follows from \eqref{eqn:commdiag} that we have the commutative diagram
$$
\xymatrix{
\Phi(\mathcal{E}_k) \otimes \Phi(\mathcal{E}_l) \otimes \Phi\big(\Omega^{(0,1)}_q(G/L_S)\big) \ar[rrr]^{\Phi(\sigma_k \otimes \mathrm{id}) \, \circ \, \Phi(\mathrm{id} \otimes \sigma_l)} \ar[d]_{\simeq \, \otimes \mathrm{id}} & & & \Phi\big(\Omega^{(0,1)}_q(G/L_S)\big) \otimes \Phi(\mathcal{E}_{k}) \otimes \Phi(\mathcal{E}_{l}) \ar[d]^{\mathrm{id} \otimes \, \simeq}\\
\Phi(\mathcal{E}_{k+l}) \otimes \Phi\big(\Omega^{(0,1)}_q(G/L_S)\big) \,\, \ar[rrr]_{\Phi(\sigma_{k+l})} & & & \Phi\big(\Omega^{(0,1)}_q(G/L_S)\big) \otimes \Phi(\mathcal{E}_{k+l}).\\
}
$$
We now see, for $e \in \mathcal{E}_{k+l}$, $b \in \mathcal{O}_q(G/L_S)$, that
\begin{align} \label{eqn:theta}
\Phi(\sigma_{k+l})\big([e] \otimes \big[\,\adel b\big]\big) = \theta_k\theta_l \big[\,\adel b\big] \otimes [e].
\end{align}
Recalling next Example \ref{eg:trivialBimodConn}, we see that
\begin{align*}
\Phi(\sigma_1)\big([1] \otimes \big[\,\adel b\big]\big) = \big[\,\adel b\big] \otimes [1].
\end{align*}
It now follows from \ref{eqn:theta} that $\theta_{-1} = \theta_1^{-1}$, which in turn implies that the claimed nonzero scalar $\theta \in \mathbb{C}$ is given by $\theta_1$.
\end{proof}

\begin{eg}In this example we determine the constant $\theta$ for the standard Podle\'s sphere $\mathcal{O}_q\big(S^2\big)$. It follows from \eqref{eqn:exadelbar} that, for $e \in \mathcal{E}_{1}$, the element $\Phi(\sigma_1)\big[e \otimes \adel b\big]$ is equal to
\begin{align*}
\Phi(\sigma_1)\big[e \otimes \adel b\big]={} &\big[\,\adel(ebu_{11}) \otimes u_{22}\big] - \big[\,\adel(eu_{11}) \otimes u_{22}b\big] \\
&{} - q\big(\big[\, \adel(ebu_{21}) \otimes u_{12}\big]- q \big[\,\adel(eu_{21}) \otimes u_{12}b\big]\big).
\end{align*}
Choosing $e = u_{22}$ and $b = u_{12}u_{22}$, we get
\begin{align*}
\Phi(\sigma_1)\big[u_{22} \otimes \adel(u_{12}u_{22})\big]&=\big[\,\adel(u_{22}u_{12}u_{22}u_{11}) \otimes u_{22}\big]\\
&=\big[\,\adel(u_{22}u_{12})u_{22}u_{11} \otimes u_{22}\big]+\big[u_{22}u_{12}\adel(u_{22}u_{11}) \otimes u_{22}\big]\\
&=q^{-1}\big[\,\adel(u_{12}u_{22}) \otimes u_{22}\big].
\end{align*}
From which it follows that $\theta = q^{-1}$, showing in particular that $\theta$ is not equal to $1$. This calculation can be extended to the quantum Grassmannians, and the quantum quadrics using the FRT presentation of the bimodule map $\sigma$ given in Section~\ref{subsection:FRT}.
\end{eg}

\begin{prop} \label{prop:Global.Tak}It holds that
\[
\sigma_{k}\big(e \otimes \adel b\big) = \theta^k e_{(-2)} b_{(1)} S\big(e_{(-1)}\big)S\big(b_{(2)}\big)\adel b_{(3)} \otimes e_{(0)},
\]
for any $b \in \mathcal{O}_q(G/L_S)$, and $e \in \mathcal{E}_k$.
\end{prop}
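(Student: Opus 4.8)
The plan is to lift the local identity $\Phi(\sigma_k)\big([e]\otimes[\omega]\big) = \theta^k[\omega]\otimes[e]$ of Proposition~\ref{prop:TAKtheta} up to $\mathcal{O}_q(G/L_S)$ by transporting it through Takeuchi's (monoidal) equivalence. Write $B := \mathcal{O}_q(G/L_S)$, $\Omega := \Omega^{(0,1)}_q(G/L_S)$, $\mathcal{M} := \mathcal{E}_k\otimes_B\Omega$ and $\mathcal{N} := \Omega\otimes_B\mathcal{E}_k$. Since $\sigma_k$ is a $B$-bimodule map and, by Theorem~\ref{thm:theTheorem}(3), a left $\mathcal{O}_q(G)$-comodule map, it is a morphism $\mathcal{M}\to\mathcal{N}$ in ${}^{~~~\,\mathcal{O}_q(G)}_{\mathcal{O}_q(G/L_S)}\mathrm{Mod}_{\mathcal{O}_q(G/L_S)}$. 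Let $\eta$ denote the unit natural isomorphism of the equivalence, so that $\eta_\mathcal{F}\colon \mathcal{F}\to\mathcal{O}_q(G)\,\square_{\mathcal{O}_q(L_S)}\,\Phi(\mathcal{F})$ is $f\mapsto f_{(-1)}\otimes[f_{(0)}]$, and use monoidality to identify $\Phi(\mathcal{M})$ with $\Phi(\mathcal{E}_k)\otimes\Phi(\Omega)$ and $\Phi(\mathcal{N})$ with $\Phi(\Omega)\otimes\Phi(\mathcal{E}_k)$. Then
\[
\sigma_k = \eta_\mathcal{N}^{-1}\circ\big(\mathrm{id}_{\mathcal{O}_q(G)}\,\square\,\Phi(\sigma_k)\big)\circ\eta_\mathcal{M}.
\]

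Next one evaluates the right-hand side on a generator $e\otimes\adel b$. From $\Delta_L|_{\mathcal{E}_k} = \Delta$ and the left coaction $\Delta_L(\adel b) = b_{(1)}\otimes\adel b_{(2)}$ on $\Omega$ one gets $\eta_\mathcal{M}(e\otimes\adel b) = e_{(-1)}b_{(1)}\otimes[e_{(0)}]\otimes[\adel b_{(2)}]$; applying $\mathrm{id}\,\square\,\Phi(\sigma_k)$ and Proposition~\ref{prop:TAKtheta} (so that $\Phi(\sigma_k)$ is $\theta^k$ times the flip) yields $\theta^k\, e_{(-1)}b_{(1)}\otimes[\adel b_{(2)}\otimes e_{(0)}]$ in $\mathcal{O}_q(G)\,\square_{\mathcal{O}_q(L_S)}\,\Phi(\mathcal{N})$. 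It then remains to identify the $\eta_\mathcal{N}$-preimage of this element, and the assertion is that it equals $\theta^k\, e_{(-2)}b_{(1)}S(e_{(-1)})S(b_{(2)})\,\adel b_{(3)}\otimes e_{(0)}$. To confirm this I would apply $\eta_\mathcal{N}$ directly to the latter: expand the diagonal left coaction on $\mathcal{N}$, apply $\Delta$ to the coefficient $e_{(-2)}b_{(1)}S(e_{(-1)})S(b_{(2)})$ and to the argument $b_{(3)}$, collapse the resulting Sweedler string using $S(b_{(i)})b_{(i+1)} = \e(b_{(i)})1$ and $S(e_{(i)})e_{(i+1)} = \e(e_{(i)})1$, and observe that, modulo $B^+\mathcal{N}$, the surviving coefficient --- which is again of the shape $e'_{(1)}b'_{(1)}S(e'_{(2)})S(b'_{(2)})\in B$ --- is replaced by its counit $\e(e'_{(1)})\e(e'_{(2)})\e(b'_{(1)})\e(b'_{(2)})$; the remaining Sweedler sums then collapse by the counit axioms to give back exactly $\theta^k\, e_{(-1)}b_{(1)}\otimes[\adel b_{(2)}\otimes e_{(0)}]$. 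As consistency checks, in the classical limit this reduces to the flip map, and for $k=0$ the formula collapses, via $b_{(1)}S(b_{(2)})\adel b_{(3)} = \adel b$, to the trivial bimodule map of Example~\ref{eg:trivialBimodConn}.

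The main obstacle I expect is the well-definedness of the stated formula: one must check that $e_{(-2)}b_{(1)}S(e_{(-1)})S(b_{(2)})$ actually lies in $B = \mathcal{O}_q(G/L_S)$ --- only then does its left action on $\adel b_{(3)}\in\Omega$ make sense --- and that the expression is balanced over $\otimes_B$. The first point is a coinvariance statement: it should follow from $b$ being right $\mathcal{O}_q(L_S)$-coinvariant together with the fact that $e\in\mathcal{E}_k\subseteq\mathcal{O}_q\big(G/L^{\,\mathrm{s}}_S\big)$ is $\mathcal{O}_q\big(L^{\,\mathrm{s}}_S\big)$-coinvariant and of $\mathcal{O}(U_1)$-degree $k$, so that the group-like $\mathcal{O}(U_1)$-parts carried by $e_{(-2)}$ and $S(e_{(-1)})$ cancel and the $\mathcal{O}_q(L_S)$-cocharacter of the whole expression is trivial. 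Once this is in hand, the Sweedler bookkeeping above is routine. An equivalent alternative is to take the right-hand side as the definition of a map $\tau_k$, verify directly that it is a morphism in ${}^{~~~\,\mathcal{O}_q(G)}_{\mathcal{O}_q(G/L_S)}\mathrm{Mod}_{\mathcal{O}_q(G/L_S)}$, compute $\Phi(\tau_k)\big([e]\otimes[\adel b]\big) = \theta^k[\adel b]\otimes[e]$ by the same counit argument, and conclude $\tau_k = \sigma_k$ from the faithfulness of $\Phi$; the coinvariance point then reappears precisely as the well-definedness of $\tau_k$.
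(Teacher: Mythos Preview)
Your proposal is correct and follows essentially the same route as the paper: both write $\sigma_k=\unit^{-1}\circ(\mathrm{id}\otimes\Phi(\sigma_k))\circ\unit$, compute $\unit(e\otimes\adel b)=e_{(-1)}b_{(1)}\otimes[e_{(0)}\otimes\adel b_{(2)}]$, and then apply Proposition~\ref{prop:TAKtheta} to reach $\theta^k\,e_{(-1)}b_{(1)}\otimes[\adel b_{(2)}\otimes e_{(0)}]$. The only divergence is in the final step: the paper simply invokes the explicit formula for $\unit^{-1}$ from \cite[Corollary~2.7]{MMF2}, whereas you instead verify the preimage by applying $\eta_{\mathcal N}$ to the candidate and collapsing the Sweedler string via $S(b_{(2)})b_{(3)}=\e(b_{(2)})$ and $S(e_{(-2)})e_{(-1)}=\e(e_{(-2)})$; both are valid, and your version has the merit of being self-contained while making the coinvariance issue $e_{(-2)}b_{(1)}S(e_{(-1)})S(b_{(2)})\in B$ explicit (the paper leaves this implicit in the cited formula).
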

\begin{proof}
A unit for the equivalence is given by
\begin{gather*}
\unit\colon \ \mathcal{F} \to \Psi \circ \Phi(\mathcal{F}), \qquad f \mapsto f_{(-1)} \otimes [f_{(0)}],
\end{gather*}
for any relative Hopf module $\mathcal{F}$. Thus we have that
\begin{align*}
\sigma_k(e \otimes \adel b) &= \unit^{-1} \circ (\mathrm{id} \otimes \Phi(\sigma_k)) \circ \unit\big(e \otimes \adel b\big)\\
&= \unit^{-1} \circ (\mathrm{id} \otimes \Phi(\sigma_k))\big(e_{(-1)}b_{(1)} \otimes \big[e_{(0)} \otimes \adel b_{(2)}\big]\big)\\
&= \theta^k \unit^{-1} \big(e_{(-1)}b_{(1)} \otimes \big[\,\adel b_{(2)} \otimes e_{(0)}\big]\big).
\end{align*}
Recalling the explicit presentation of the inverse of $\unit$ given in \cite[Corollary~2.7]{MMF2}, we see that
\begin{align*}
\theta^k \unit^{-1} \big(e_{(-1)}b_{(1)} \otimes \big[\,\adel b_{(2)} \otimes e_{(0)}\big]\big) &= \theta^k e_{(-2)}b_{(1)} S\big(b_{(2)}e_{(-1)}\big)\adel b_{(3)} \otimes e_{(0)}\\
&= \theta^k e_{(-2)} b_{(1)} S\big(e_{(-1)}\big)S\big(b_{(2)}\big)\adel b_{(3)} \otimes e_{(0)},
\end{align*}
giving us the claimed identity.
\end{proof}

It is interesting to note that in the commutative case, it follows from the antipode axiom for a Hopf algebra that the formula in Proposition \ref{prop:Global.Tak} reduces to the flip map, as it should.

\appendix

\section[Table of simple roots for the irreducible quantum flag manifolds]{Table of simple roots for the irreducible quantum\\ flag manifolds}\label{appendixA}

In this appendix we recall, for the reader's convenience, an explicit description of the irreducible quantum flag manifolds. In the classical setting the irreducible flag manifolds are those flag manifolds $G/L_S$ for which the space of anti-holomorphic forms $\Omega^{(0,1)}_q(G/L_S)$ is irreducible as a~$G$-module. The corresponding subset of simple roots is always of the form $\Pi\backslash \{\alpha_x\}$, such that~$\alpha_x$ has coefficient $1$ in the highest root of $\mathfrak{g}$. In Table~\ref{table1} $\alpha_x$ is denoted graphically by a~coloured node in the Dynkin diagram of~$\mathfrak{g}$, where the nodes are numbered according to \cite[Section~11.4]{Humph}.

\begin{table}[h!]\centering
\caption{}\label{table1}\vspace{1mm}
\begin{tabular}{|c|c|c|c|}
\hline
& & & \\[-2mm]
Series & $\mathcal{O}_q(G)$ & Crossed node & $\mathcal{O}_q(G/L_S)$ \\
& & & \\[-2mm]
\hline
& & & \\[-2mm]
$A_{n}$ & $\mathcal{O}_q(\mathrm{SU}_{n+1})$ &
\begin{tikzpicture}[scale=.5]
\draw
(0,0) circle [radius=.25]
(8,0) circle [radius=.25]
(2,0) circle [radius=.25]
(6,0) circle [radius=.25] ;

\draw[fill=black]
(4,0) circle [radius=.25] ;

\draw[thick,dotted]
(2.25,0) -- (3.75,0)
(4.25,0) -- (5.75,0);

\draw[thick]
(.25,0) -- (1.75,0)
(6.25,0) -- (7.75,0);
\end{tikzpicture} & $\mathcal{O}_q(\mathrm{Gr}_{n+1,m})$\\

& & & \\[-2mm]

$B_n$ & $\mathcal{O}_q(\mathrm{Spin}_{2n+1})$ &
\begin{tikzpicture}[scale=.5]
\draw
(4,0) circle [radius=.25]
(2,0) circle [radius=.25]
(6,0) circle [radius=.25]
(8,0) circle [radius=.25] ;
\draw[fill=black]
(0,0) circle [radius=.25];

\draw[thick]
(.25,0) -- (1.75,0);

\draw[thick,dotted]
(2.25,0) -- (3.75,0)
(4.25,0) -- (5.75,0);

\draw[thick]
(6.25,-.06) --++ (1.5,0)
(6.25,+.06) --++ (1.5,0);

\draw[thick]
(7,0.15) --++ (-60:.2)
(7,-.15) --++ (60:.2);
\end{tikzpicture} & $\mathcal{O}_q(\mathbf{Q}_{2n+1})$ \\

& & & \\[-2mm]

$C_n$ & $\mathcal{O}_q(\mathrm{Sp}_{n})$ &
\begin{tikzpicture}[scale=.5]
\draw
(0,0) circle [radius=.25]
(2,0) circle [radius=.25]
(4,0) circle [radius=.25]
(6,0) circle [radius=.25] ;
\draw[fill=black]
(8,0) circle [radius=.25];

\draw[thick]
(.25,0) -- (1.75,0);

\draw[thick,dotted]
(2.25,0) -- (3.75,0)
(4.25,0) -- (5.75,0);

\draw[thick]
(6.25,-.06) --++ (1.5,0)
(6.25,+.06) --++ (1.5,0);

\draw[thick]
(7,0) --++ (60:.2)
(7,0) --++ (-60:.2);
\end{tikzpicture} & $\mathcal{O}_q(\mathbf{L}_{n})$ \\

& & & \\[-2mm]

$D_n$ & $\mathcal{O}_q(\mathrm{Spin}_{2n})$ &
\begin{tikzpicture}[scale=.5]

\draw[fill=black]
(0,0) circle [radius=.25] ;

\draw
(2,0) circle [radius=.25]
(4,0) circle [radius=.25]
(6,.5) circle [radius=.25]
(6,-.5) circle [radius=.25];

\draw[thick]
(.25,0) -- (1.75,0)
(4.25,0.1) -- (5.75,.5)
(4.25,-0.1) -- (5.75,-.5);

\draw[thick,dotted]
(2.25,0) -- (3.75,0);
\end{tikzpicture} & $\mathcal{O}_q(\mathbf{Q}_{2n})$ \\

& & & \\[-2mm]

$D_n$ & $\mathcal{O}_q(\mathrm{Spin}_{2n})$ &
\begin{tikzpicture}[scale=.5]
\draw
(0,0) circle [radius=.25]
(2,0) circle [radius=.25]
(4,0) circle [radius=.25] ;

\draw[fill=black]
(6,.5) circle [radius=.25];
\draw
(6,-.5) circle [radius=.25];

\draw[thick]
(.25,0) -- (1.75,0)
(4.25,0.1) -- (5.75,.5)
(4.25,-0.1) -- (5.75,-.5);

\draw[thick,dotted]
(2.25,0) -- (3.75,0);
\end{tikzpicture} & $\mathcal{O}_q(\textbf{S}_{n})$ \\

$E_6$ & $\mathcal{O}_q(E_6)$ & \begin{tikzpicture}[scale=.5]
\draw
(2,0) circle [radius=.25]
(4,0) circle [radius=.25]
(4,1) circle [radius=.25]
(6,0) circle [radius=.25] ;

\draw
(0,0) circle [radius=.25];
\draw[fill=black]
(8,0) circle [radius=.25];

\draw[thick]
(.25,0) -- (1.75,0)
(2.25,0) -- (3.75,0)
(4.25,0) -- (5.75,0)
(6.25,0) -- (7.75,0)
(4,.25) -- (4, .75);
\end{tikzpicture}

& $\mathcal{O}_q(\mathbb{OP}^2)$ \\

& & & \\[-2mm]

$E_7$ & $\mathcal{O}_q(E_7)$ &
\begin{tikzpicture}[scale=.5]
\draw
(0,0) circle [radius=.25]
(2,0) circle [radius=.25]
(4,0) circle [radius=.25]
(4,1) circle [radius=.25]
(6,0) circle [radius=.25]
(8,0) circle [radius=.25];

\draw[fill=black]
(10,0) circle [radius=.25];

\draw[thick]
(.25,0) -- (1.75,0)
(2.25,0) -- (3.75,0)
(4.25,0) -- (5.75,0)
(6.25,0) -- (7.75,0)
(8.25, 0) -- (9.75,0)
(4,.25) -- (4, .75);
\end{tikzpicture} & $\mathcal{O}_q(\textbf{F})$
 \\[1mm]
\hline
\end{tabular}
\end{table}

\subsubsection*{Acknowledgements}
AC was supported by the GA\v{C}R project 20-17488Y and \mbox{RVO: 67985840}. AC and R\'OB are supported by the Charles University PRIMUS grant \emph{Spectral Noncommutative Geometry of Quantum Flag Manifolds} PRIMUS/21/SCI/026.
We would like to thank Henrik Winther for useful discussions. Moreover, we would like to thank the referees for their careful readings of the paper and their helpful suggestions.

\pdfbookmark[1]{References}{ref}
\LastPageEnding

\end{document}